\title[Bounded cohomology is not profinite]{Bounded cohomology is not\\ a profinite invariant}
 \author[D. Echtler]{Daniel Echtler}
 \author[H. Kammeyer]{Holger Kammeyer}
 \address{Mathematical Institute, University of D{\"u}sseldorf, Germany}
 \email{daniel.echtler@hhu.de}
 \email{holger.kammeyer@hhu.de}
\subjclass[2010]{22E40, 20E18, 11F75, 11E72}
\keywords{profinite invariance, bounded cohomology}
\theoremstyle{plain}
\newtheorem{theorem}[equation]{Theorem}
\newtheorem{lemma}[equation]{Lemma}
\newtheorem{proposition}[equation]{Proposition}
\theoremstyle{definition}
\newtheorem*{definition*}{Definition}
\newtheorem{remark}[equation]{Remark}
\newtheorem*{observation*}{Observation}
\providecommand{\ignore}[1]{}
\providecommand{\R}{\mathbb{R}}
\providecommand{\Q}{\mathbb{Q}}
\providecommand{\Z}{\mathbb{Z}}
\providecommand{\C}{\mathbb{C}}
\DeclareMathOperator\Sp{Sp}
\DeclareMathOperator\Ad{Ad}
\DeclareMathOperator\Aut{Aut}
\newcommand*{\arXiv}[1]{ \href{http://www.arxiv.org/abs/#1}{arXiv:\textbf{#1}}}
\begin{document}

\begin{abstract}
  We construct pairs of residually finite groups with isomorphic profinite completions such that one has non-vanishing and the other has vanishing real second bounded cohomology.  The examples are lattices in different higher rank simple Lie groups.  Using Galois cohomology, we actually show that \(\operatorname{SO}^0(n,2)\) for \(n \ge 6\) and the exceptional groups \(E_{6(-14)}\) and \(E_{7(-25)}\) constitute the complete list of higher rank Lie groups admitting such examples.
\end{abstract}

\maketitle

\section{Introduction}

A group invariant is called \emph{profinite} if it agrees for any two finitely generated residually finite groups \(\Gamma, \Lambda\) with isomorphic profinite completions.  A standard example is the abelianization \(H_1(\Gamma)\), a more sophisticated example is largeness~\cite{Lackenby:large}.  It seems, however, that more often than not, group invariants fail to be profinite.  Kazhdan's property~\((T)\)~\cite{Aka:kazhdan}, higher \(\ell^2\)-Betti numbers~\cite{Kammeyer-Sauer:spinor-groups}, Euler characteristic and \(\ell^2\)-torsion~\cite{Kammeyer-et-al:profinite-invariants}, amenability~\cite{Kionke-Schesler:amenability}, finiteness properties~\cite{Lubotzky:finiteness-properties}, and most recently Serre's Property~FA~\cite{Cheetham-West-et-al:property-fa} are all known not to be profinite.  This list is by no means exhaustive and \emph{bounded cohomology} is another item:

\begin{lemma} \label{lemma:not-profinite}
Let \(\Gamma = \operatorname{Spin}(7,2)(\Z)\) and \(\Lambda = \operatorname{Spin}(3,6)(\Z)\).  Then \(\widehat{\Gamma} \cong \widehat{\Lambda}\) but \(H^2_b(\Gamma; \R) \cong \R\) while \(H^2_b(\Lambda; \R) \cong 0\).
\end{lemma}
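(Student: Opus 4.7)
Both $\Gamma$ and $\Lambda$ are (up to commensurability) the integer points of the spin groups of standard diagonal quadratic forms $q_1$ and $q_2$ of rank $9$ over $\Q$ with signatures $(7,2)$ and $(3,6)$, respectively. My strategy is to show that, although $q_1$ and $q_2$ are inequivalent over $\R$, they are equivalent over $\Q_p$ for every finite $p$, and then to exploit this in two independent directions: a local-global argument for the profinite isomorphism, and the Hermitian versus non-Hermitian dichotomy for second bounded cohomology.

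For the profinite completions, I would begin with a direct Hilbert symbol calculation showing that $q_1$ and $q_2$ have the same (trivial) discriminant and, at every finite prime $p$, the same Hasse invariant---equal to $(-1,-1)_p$ for both forms at odd $p$, with $p=2$ handled analogously. By the local classification of non-degenerate quadratic forms, this yields $\operatorname{Spin}(q_1)_{\Q_p} \cong \operatorname{Spin}(q_2)_{\Q_p}$ at every finite $p$. Since both groups are simply connected, absolutely almost simple, and of $\Q$-rank at least two (Witt indices $2$ and $3$ respectively), the congruence subgroup property---known in this setting by Kneser, Matsumoto, and Bass--Milnor--Serre---identifies each profinite completion with the adelic compact group $\operatorname{Spin}(q_i)(\widehat{\Z})$ up to a finite discrepancy. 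Assembling the local isomorphisms then gives $\widehat{\Gamma} \cong \widehat{\Lambda}$.

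For bounded cohomology, the relevant input is that $\operatorname{SO}^0(7,2)$ is of Hermitian type (its symmetric space is the Cartan domain of type $\mathrm{IV}_7$) while $\operatorname{SO}^0(3,6)$ is not; both are simple of $\R$-rank at least two. By Burger--Monod, the continuous bounded cohomology $H^2_{cb}(G;\R)$ is one-dimensional and generated by the bounded K\"ahler class in the Hermitian case, and vanishes otherwise; moreover, for an irreducible lattice in a higher-rank semisimple Lie group, restriction gives an injection, indeed an isomorphism onto the full $H^2_b$ via the boundary-theoretic machinery of Burger--Monod. This yields $H^2_b(\Gamma;\R) \cong \R$ and $H^2_b(\Lambda;\R) = 0$. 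The main obstacle I anticipate is that both lattices are \emph{non-cocompact}, so non-triviality of the pullback K\"ahler class on $\Gamma$ is not automatic; I expect to appeal to the explicit Shilov-boundary bounded Borel cocycle (Dupont, Guichardet--Wigner, Burger--Iozzi) and verify that its pullback along a $\Gamma$-equivariant boundary map remains non-cohomologous to zero.
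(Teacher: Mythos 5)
Your overall route coincides with the paper's (congruence subgroup property plus a local comparison of the two forms for the profinite half, and Burger--Monod/Monod--Shalom for the bounded cohomology half), but there is a genuine gap in the first half. CSP together with strong approximation identifies \(\widehat{\Gamma}\) with \(\operatorname{Spin}(q_1)(\widehat{\Z}) \cong \prod_p \operatorname{Spin}(q_1)(\Z_p)\), i.e.\ with a product of groups of \emph{\(\Z_p\)-points}. So what you must compare are the integral local structures: you need the forms of signature \((7,2)\) and \((3,6)\) to be isometric over \(\Z_p\) (or at least the resulting compact open subgroups to be isomorphic), not merely over \(\Q_p\). An abstract isomorphism \(\operatorname{Spin}(q_1)_{\Q_p} \cong \operatorname{Spin}(q_2)_{\Q_p}\), which is all your Hilbert-symbol computation delivers, does not by itself match up \(\operatorname{Spin}(q_1)(\Z_p)\) with \(\operatorname{Spin}(q_2)(\Z_p)\): compact open subgroups of a \(p\)-adic group need not be conjugate, and for \(p=2\) the passage from \(\Q_2\)-equivalence of unimodular lattices to \(\Z_2\)-equivalence is not automatic (for odd \(p\) it does follow from the classification of unimodular \(\Z_p\)-lattices). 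The paper bypasses this by exhibiting a \(\Z_p\)-isometry directly, namely \(x_1^2+x_2^2+x_3^2+x_4^2 \cong_{\Z_p} -x_1^2-x_2^2-x_3^2-x_4^2\) for all finite \(p\), which makes the two rank-nine forms \(\Z_p\)-isometric, and then uses functoriality of \(\operatorname{Spin}(q)(\mathcal{O})\) in isometries to get \(\operatorname{Spin}(q_1)(\Z_p)\cong\operatorname{Spin}(q_2)(\Z_p)\) for every \(p\). Relatedly, your hedge ``up to a finite discrepancy'' is both unnecessary and damaging: since both forms have Witt index at least two, the congruence kernel is trivial and \(\widehat{\Gamma} \cong \operatorname{Spin}(q_1)(\widehat{\Z})\) holds exactly; if the identification were only up to finite index you would not obtain the isomorphism \(\widehat{\Gamma}\cong\widehat{\Lambda}\) on the nose for the full integral groups, which is what the lemma asserts.

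On the bounded cohomology half, the obstacle you anticipate is not actually there. The theorem of Monod--Shalom (extending Burger--Monod), which the paper simply quotes, applies to \emph{all} lattices, uniform or not, in a higher rank Lie group \(G\): \(H^2_b(\Gamma;\R)\cong\R\) when \(\pi_1 G\) is infinite (equivalently, when \(G/K\) is hermitian) and \(H^2_b(\Gamma;\R)\cong 0\) otherwise. Since \(\operatorname{SO}^0(7,2)\) is hermitian while \(\operatorname{SO}^0(3,6)\) is not, and bounded cohomology is unaffected by the finite central kernel relating \(\operatorname{Spin}\) to \(\operatorname{SO}^0\), no Shilov-boundary cocycle computation or separate non-vanishing verification for non-cocompact lattices is needed.
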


For an ad hoc definition of the spinor group \(\operatorname{Spin}(q)(\mathcal{O})\)  of a quadratic form \(q\) over an integral domain \(\mathcal{O}\), we refer to \cite{Kammeyer-Sauer:spinor-groups}*{Section~3}.  The profinite completion \(\widehat{\Gamma}\) of a group \(\Gamma\) is the projective limit of the inverse system of finite quotient groups of \(\Gamma\).  The bounded cohomology \(H^*_b(\Gamma; \R)\) with real coefficients of a discrete group \(\Gamma\) is the cohomology of the cochain complex \(\ell^\infty(\Gamma^{*+1}, \R)^\Gamma\), of bounded functions \(\Gamma^{*+1} \rightarrow \R\) which are constant on the orbits of the diagonal \(\Gamma\)-action on \(\Gamma^{*+1}\), with the usual differential.  The reader may consult~\cite{Frigerio:bounded-cohomology} for further details.

Let us quickly prove the lemma.  If the Witt index of an integral quadratic form \(q\) is at least two, then \(\operatorname{Spin}(q)(\Z)\) has the \emph{congruence subgroup property} (CSP).  This implies that
\[ \widehat{\operatorname{Spin}(q)(\Z)} \cong \operatorname{Spin}(q)(\widehat{\Z}) \cong \textstyle \operatorname{Spin}(q)(\prod_p \Z_p) \cong \textstyle \prod_p \operatorname{Spin}(q)(\Z_p). \]
The standard forms of signature \((7,2)\) and \((3,6)\) are isometric over \(\Z_p\) for all (finite) primes \(p\) because standard quadratic form theory shows
\[ x_1^2 + x_2^2 + x_3^2 + x_4^2 \ \  \cong_{\Z_p} \ -x_1^2 -x_2^2 -x_3^2 -x_4^2. \]
Since \(\operatorname{Spin}(q)(\mathcal{O})\) is functorial in isometries \(q \cong_\mathcal{O} q'\) \cite{Kammeyer-Sauer:spinor-groups}*{Lemma~7}, we conclude \(\widehat{\Gamma} \cong \widehat{\Lambda}\).  But the second bounded cohomology of higher rank lattices was computed by Monod--Shalom~\cite{Monod-Shalom:cocycle-superrigidity}*{Theorem~1.4} and it turns out that \(H^2_b(\Gamma, \R) \cong \R\) whereas \(H^2_b(\Lambda, \R) \cong 0\).

\medskip
This counterexample to the profiniteness of bounded cohomology is similar in spirit to M.\,Aka's counterexamples to the profiniteness of Property~(T) in~\cite{Aka:kazhdan}.  Experts might have been aware of it but we could not find a reference.  In any case, it has prompted us to investigate thoroughly how often or rare such pairs of groups occur among higher rank lattices.  The purpose of this article is to give the complete picture.

\medskip
To state the result, let us agree that by a \emph{higher rank Lie group}, we mean the group of real points \(G = \mathbf{G}(\R)\) of a connected almost \(\R\)-simple linear algebraic \(\R\)-group \(\mathbf{G}\) with \(\operatorname{rank}_\R \mathbf{G} \ge 2\).  We say that \(G\) \emph{exhibits non-profinite second bounded cohomology} if there exists a lattice \(\Gamma \le G\) and another lattice \(\Lambda \le H\) in some other higher rank Lie group~\(H\) such that \(\widehat{\Gamma} \cong \widehat{\Lambda}\) and such that \(H^2_b(\Gamma; \R) \not\cong 0\) while \(H^2_b(\Lambda; \R) \cong 0\) .

\begin{theorem} \label{thm:main-theorem}
Let \(G\) be a higher rank Lie group.  Then \(G\) exhibits non-profinite second bounded cohomology if and only if it is isogenous to
    \[ \operatorname{SO}^0(n,2) \text{ for } n \ge 6, \text{ or to } \ E_{6(-14)}, \text{ or to } E_{7(-25)}. \]
  \end{theorem}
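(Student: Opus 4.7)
The plan is to recast the problem in terms of arithmetic data via Margulis arithmeticity, reduce the isomorphism of profinite completions to local isomorphism at finite places via the congruence subgroup property, and then perform a case-by-case analysis using Galois cohomology and the Tits classification of absolutely almost simple groups.

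First I invoke Monod--Shalom as in Lemma~\ref{lemma:not-profinite}: an irreducible higher rank lattice $\Gamma\le G$ has $H^2_b(\Gamma;\R)\ne 0$ exactly when $G$ is Hermitian. So $G$ must be one of the Hermitian higher rank simple Lie groups (up to isogeny: $\operatorname{SU}(p,q)$ with $\min(p,q)\ge 2$, $\operatorname{Sp}(2n,\R)$, $\operatorname{SO}^0(n,2)$, $\operatorname{SO}^*(2n)$, $E_{6(-14)}$, and $E_{7(-25)}$), while the partner group $H$ must be non-Hermitian. As rank is at least two, $\Gamma$ and $\Lambda$ are arithmetic by Margulis and, up to commensurability, of the form $\mathbf{G}(\mathcal{O}_F)$ and $\mathbf{H}(\mathcal{O}_{F'})$ for simply connected absolutely almost simple algebraic groups over number fields. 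An isomorphism of profinite completions preserves the number field and the absolute root datum (both visible in the local data at almost every finite place), so one may assume $\mathbf{G}$ and $\mathbf{H}$ are two $F$-forms of a fixed absolute simply connected type.

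I then invoke the congruence subgroup property, established for all higher rank arithmetic groups relevant here by Raghunathan, Rapinchuk--Tomanov, and others. Together with strong approximation this gives, modulo the finite congruence kernel,
\[ \widehat{\mathbf{G}(\mathcal{O}_F)} \;\cong\; \prod_{v\nmid\infty}\mathbf{G}(\mathcal{O}_v), \]
so $\widehat\Gamma\cong\widehat\Lambda$ translates into $\mathbf{G}\otimes_F F_v\cong\mathbf{H}\otimes_F F_v$ at every nonarchimedean place $v$. The theorem thus reduces to a Galois-cohomological question: for which Hermitian higher rank real form $G$ does there exist a number field $F$ and two $F$-forms of a fixed absolute simply connected type that agree at all finite places but have distinct archimedean signatures, with $G$ appearing as a factor at infinity of one while the other is non-Hermitian and still higher rank? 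This I attack case by case via Tits's classification. For type $D_n$, inner forms are quadratic forms and the Hasse--Minkowski principle shows the discriminant and finite-place Hasse invariants determine the form up to archimedean signature; the identity $\langle 1,1,1,1\rangle\cong_{\Q_p}\langle -1,-1,-1,-1\rangle$ underlying Lemma~\ref{lemma:not-profinite} turns signature $(n,2)$ into $(n-4,6)$ with matching local data, producing the required pair whenever $n\ge 7$, while the borderline case $n=6$ demands a variant construction using either the triality of $D_4$ or quadratic forms over a suitable quadratic extension of $\Q$. For $E_{6(-14)}$ and $E_{7(-25)}$, I would exhibit explicit $F$-forms together with non-Hermitian Galois twins by manipulating the Tits index and anisotropic kernel, using the surjectivity of certain local-global Galois-cohomology maps available in these exceptional types.

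The remaining Hermitian families must then be excluded. $\operatorname{Sp}(2n,\R)$ corresponds to a symplectic form, which carries no nontrivial archimedean invariant distinguishing it from other forms matching at all finite places; $\operatorname{SU}(p,q)$ with $\min(p,q)\ge 2$ arises from Hermitian forms over a CM extension, where Landherr's Hasse principle forces the archimedean signature to be determined by the finite-place data; and $\operatorname{SO}^*(2n)$ comes from skew-Hermitian forms over a quaternion algebra, where an analogous rigidity argument rules out Hermitian-to-non-Hermitian twins. The main obstacles I anticipate are the delicate case $n=6$ of $\operatorname{SO}^0(6,2)$, where the naive sign-flip construction degenerates because $(6,2)$ and $(2,6)$ yield the same Hermitian group, and the uniform formulation of the Hasse-principle-based rigidity statements needed to exclude the $\operatorname{SU}$, $\operatorname{Sp}$, and $\operatorname{SO}^*$ families in full generality.
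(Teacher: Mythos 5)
Your overall strategy (Monod--Shalom to reduce to hermitian vs.\ non-hermitian, Margulis arithmeticity, comparison of local data, then case-by-case Galois cohomology) is the same as the paper's, but the ``only if'' half of your argument has genuine gaps. First, you assume the profinite isomorphism lets you place $\mathbf{G}$ and $\mathbf{H}$ over the \emph{same} number field; this is false in general, since non-isomorphic (arithmetically equivalent) fields can have isomorphic finite adele rings. The paper keeps two fields $k,l$, obtains only an isomorphism $\mathbb{A}^f_k\cong\mathbb{A}^f_l$ with a compatible group scheme isomorphism via adelic superrigidity, and then uses Klingen's results on arithmetical similarities to compare real places. Second, you invoke CSP ``for all higher rank arithmetic groups relevant here'' to convert $\widehat\Gamma\cong\widehat\Lambda$ into local isomorphisms; CSP is not known in this generality (and the paper deliberately avoids it in this direction, using adelic superrigidity instead). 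Third, your exclusion of $\operatorname{SU}(p,q)$ rests on a false claim: the signature of a hermitian form over a CM extension is \emph{not} determined by the finite places (Landherr's local--global principle includes the archimedean ones), and in any case signature rigidity is not what is needed. The real danger is a partner lattice in an \emph{inner} real form of type $A$ such as $\operatorname{SL}(n,\R)$ or $\operatorname{SL}(m,\mathbb{H})$; the paper rules this out by showing (Proposition on inner twists, using the correspondence of quadratic extensions under $\mathbb{A}^f_K\cong\mathbb{A}^f_L$ and counting real embeddings) that $\mathbf{H}_w$ must be an outer twist, hence again of $\operatorname{SU}$ type. Finally, you never treat $\operatorname{SO}^0(5,2)$ (type $B_3$): its only higher rank real twist is $\operatorname{SO}^0(4,3)$, and excluding that pair requires a separate argument (the paper uses that the dimension of the symmetric space mod $4$ is a profinite invariant); your sign-flip discussion for $D_n$ does not cover this case at all.

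The ``if'' half is closer to the paper but remains a sketch at the decisive points. For $E_7$ and $E_6$ you must not only use the Borel--Harder/Prasad--Rapinchuk surjectivity to prescribe the real form ($E_{7(-25)}$ paired with $E_{7(7)}$, and $E_{6(-14)}$ paired with $E_{6(2)}$ inside a quasi-split outer form ${}^2E_6$), but also prove that the two $\Q$-forms agree at \emph{every} finite place, including the auxiliary prime where surjectivity gives no control; the paper does this via the injectivity of $H^2(\Q,\mu_2)\to\bigoplus_p H^2(\Q_p,\mu_2)$ (Albert--Brauer--Hasse--Noether) together with Kneser's vanishing of $H^1(\Q_p,\mathbf{G})$. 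One must then verify \emph{trivial} (not merely central) congruence kernels, which the paper gets from vanishing metaplectic kernels (comparing centers of the real forms with those of their universal covers) plus Raghunathan (and Gille for ${}^2E_6$), before strong approximation produces profinitely isomorphic lattices. For $\operatorname{SO}^0(6,2)$ you correctly flag that the sign-flip degenerates, but the needed construction (quasi-split trialitarian ${}^3D_4$ or ${}^6D_4$ forms pairing $\operatorname{SO}^0(6,2)$ with $\operatorname{SO}^0(4,4)$) is left entirely open in your proposal.
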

  
  Here we call two simple Lie groups \emph{isogenous} if they have isomorphic Lie algebras.  Note that a finite index subgroup of the group \(\Gamma\) from Lemma~\ref{lemma:not-profinite} is a lattice in \(\operatorname{SO}^0(7,2)\).  The lemma actually provides the easiest possible example because the examples of lattices that we construct in \(\operatorname{SO}^0(6,2)\) come from triality forms of type \(D_4\), as we will see.

  \medskip
  Let us outline the proof of Theorem~\ref{thm:main-theorem}.  The key result is the aforementioned theorem due to Monod--Shalom \cite{Monod-Shalom:cocycle-superrigidity}*{Theorem~1.4} which extends a previous result of Burger--Monod \cite{Burger-Monod:bounded-cohomology}*{Corollary~1.6}: for a lattice \(\Gamma \le G\) in a higher rank Lie group, we have \(H^2_b(\Gamma; \R) \cong \R\) if \(\pi_1 G\) is infinite and \(H^2_b(\Gamma; \R) \cong 0\) otherwise.  It is well-known that \(\pi_1 G\) is infinite if and only if the symmetric space \(G/K\) associated with \(G\) is \emph{hermitian}~\cite{Helgason:differential-geometry}*{Theorem VIII.6.1, p.\,381}.  The classification of hermitian symmetric spaces is long-established~\cite{Helgason:differential-geometry}*{Section~X.6.3, p.\,518}.  The irreducible hermitian symmetric spaces of higher rank are precisely the symmetric spaces of the simple Lie groups
  \begin{gather*}
    \operatorname{SU}(n,m) \text{ for } n,m \ge 2, \ \operatorname{SO}^0(n,2) \text{ for } n \ge 3, \ \operatorname{SO}^*(2n) \text{ for } n \ge 4, \\
    \operatorname{Sp}(n, \R) \text{ for } n \ge 2, \ E_{6(-14)}, \text{ and } \ E_{7(-25)}.
    \end{gather*}
  Here \(\operatorname{SO}^0(n,2)\) is the identity component of the determinant one matrices that preserve the standard quadratic form of signature \((n,2)\).  The group \(\operatorname{SO}^*(2n)\) is the quaternionic special orthogonal group as defined for instance in~\cite{Witte-Morris:arithmetic}*{Example~A2.4.2, p.\,430}.  Using the symbol ``\(\approx\)'' for isogenous groups, we have the accidental isogenies \(\operatorname{SO}^0(3,2) \approx \operatorname{Sp}(2;\R)\), \(\operatorname{SO}^0(4,2) \approx \operatorname{SU}(2,2)\), and \(\operatorname{SO}^0(6,2) \approx \operatorname{SO}^*(8)\).  Let \(G\) be one of the groups in the list and let \(\Gamma \le G\) be any lattice.  By Margulis arithmeticity, we may assume \(\Gamma\) is an arithmetic subgroup of a simply-connected simple algebraic group \(\mathbf{G}\) over some totally real number field \(k\) such that \(G \approx \mathbf{G}(k_v)\) for some real place \(v\) of \(k\) and such that \(\mathbf{G}\) is anisotropic at all other infinite places of \(k\).  The congruence subgroup property translates the question whether there exists \(\Lambda \le H\) as in the theorem to whether there exists a simply-connected simple \(l\)-group \(\mathbf{H}\), an isomorphism \(\mathbb{A}^f_k \cong \mathbb{A}^f_l\) of topological rings between the finite adele rings of \(k\) and \(l\), and a corresponding isomorphism \(\mathbf{G}(\mathbb{A}^f_k) \cong \mathbf{H}(\mathbb{A}^f_l)\) such that \(\mathbf{H}\) is anisotropic at all but one infinite place where it should be isogenous to a higher rank Lie group outside the list.  The technical achievement of this paper, beside filling in the details of the arguments thus far, is to solve this problem by Galois cohomological methods.

  \medskip
We conclude the introduction with some comments on related work and open questions.  By definition, bounded cohomology comes with a \emph{comparison map}
\(H^*_b(\Gamma; \R) \longrightarrow H^*(\Gamma; \R)\).  The kernel of this
homomorphism is denoted by \(EH^*_b(\Gamma; \R)\) and is called \emph{exact
bounded cohomology}.  In degree two, we have the well-known interpretation that
\(EH^2_b(\Gamma; \R)\) detects \emph{non-trivial quasimorphisms}: maps
\(f \colon \Gamma \rightarrow \R\) for which there exists \(D > 0\) with
\[ |f(gh) -f(g) -f(h)| \le D \] for all \(g,h \in \Gamma\) such that \(f\) is
not at bounded distance from an honest
homomorphism~\cite{Frigerio:bounded-cohomology}*{Section~2.3}.  For lattices in
higher rank linear Lie groups it was verified by M.\,Burger and
N.\,Monod~\cite{Burger-Monod:continuous}*{Theorem~21} that the comparison map
in degree two is injective. So the question whether the existence of
non-trivial quasimorphisms is a profinite property remains open for now.

However, for lattices in rank one groups, the situation is different. On the one hand, a result of Fujiwara shows that such lattices
admit many quasimorphisms \cite{Fujiwara:hyperbolic}. On the other hand, Serre
conjectured that these lattices should not have the congruence subgroup
property, so an important ingredient to construct groups with isomorphic
profinite completions would be missing. Yet, lattices in the rank one groups $\Sp(n, 1)$ and $F_{4(-20)}$ share many properties with higher rank lattices which might suggest that they in fact do have CSP~\cite{Lubotzky:more-rigid-groups}*{Section~4}.

If \(F_{4(-20)}\) has CSP, then having non-trivial exact second bounded cohomology, or equivalently having non-trivial quasimorphisms, is \emph{not} a profinite property.  Indeed, let $\mathbf{F_4}$ be the unique simply-connected absolutely almost simple $\Q$-split linear algebraic \(\Q\)-group of type $F_4$.  Since the Dynkin diagram of type $F_4$ has no symmetries and the center of $F_4$ is trivial, we have \(\mathbf{F_4} \cong \Ad \mathbf{F_4} \cong \Aut \mathbf{F_4}\).  Therefore, the Hasse principle for simply-connected groups gives \(H^1(\Q, \Aut \mathbf{F_4}) \cong H^1(\R, \Aut \mathbf{F_4})\), meaning every real form of type \(F_4\) comes with a unique \(\Q\)-structure.  Moreover, any two \(\Q\)-groups of type \(F_4\) are \(\Q_p\)-split and hence \(\Q_p\)-isomorphic for all finite primes \(p\) by Kneser's theorem~\cite{Kneser:galois}.  Therefore, if \(F_{4(-20)}\) has CSP, we can find arithmetic lattices \(\Gamma \le F_{4(-20)}\) and \(\Lambda \le F_{4(4)}\) with \(\widehat{\Gamma} \cong \widehat{\Lambda}\) but \(EH_b^2(\Gamma; \R) \neq 0\) while \(EH_b^2(\Lambda; \R) = 0\). 

\medskip
Another notion from this circle of ideas is \emph{Ulam stability}.  Here, instead of \(\R\), we consider unitary groups \(U(n) = \{ A \in \operatorname{GL}_n(\C) \colon A^* = A^{-1} \}\) and define an \emph{\(\varepsilon\)-homomorphism} as a map \(f \colon \Gamma \rightarrow U(n)\) such that \(\|f(gh) - f(g)f(h)\| \le \varepsilon\) holds for all \(g,h \in \Gamma\) where \(\|\cdot\|\) denotes the operator norm in \(M_n(\C)\).  We say that \(\Gamma\) is \emph{uniformly \(U(n)\)-stable} if there exists a function \(\delta = \delta(\varepsilon)\) with \(\lim_{\varepsilon \rightarrow 0} \delta(\varepsilon) = 0\) such that for all \(\varepsilon\)-homomorphisms \(f \colon \Gamma \rightarrow U(n)\), there exists an honest homomorphism \(F \colon \Gamma \rightarrow U(n)\) such that for all \(g \in \Gamma\), we have \(\|f(g)-F(g)\| \le \delta(\varepsilon)\).  We say that \(\Gamma\) is \emph{Ulam stable} if it is \(U(n)\)-stable for all \(n \ge 1\).

It is a straightforward consequence of the Burger--Monod theorem that lattices in higher rank linear Lie groups are uniformly \(U(1)\)-stable \cite{Glebsky-et-al:asymptotic-cohomology}*{Theorem~1.0.11}.  Most recently, L.\,Glebsky, A.\,Lubotzky, N.\,Monod, and B.\,Rangarajan proved the far reaching generalization that lattices in many higher rank semisimple Lie groups \(G\) are Ulam stable~\cite{Glebsky-et-al:asymptotic-cohomology}*{Theorem~0.0.5}.  In fact, they are even uniformly stable with respect to more general metrics on \(U(n)\).  Interestingly, the technical condition ``property-\(G(\mathcal{Q}_1, \mathcal{Q}_2)\)'' that \(G\) needs to satisfy to conclude this stability of lattices fails for~\(\operatorname{SO}^0(n,2)\), \(E_{6(-14)}\) and~\(E_{7(-25)}\) by \cite{Glebsky-et-al:asymptotic-cohomology}*{Theorem~0.0.6} but it holds true for \(E_{6(2)}\) and \(E_{7(7)}\)~\cite{Glebsky-et-al:asymptotic-cohomology}*{Proof of Proposition~6.3.6}.  (It seems to be open whether it holds for groups of type \(\operatorname{SO}^0(p,q)\) with \(p,q \ge 3\).)  We will see below that \(E_{6(2)}\) and \(E_{7(7)}\) contain lattices which are profinitely isomorphic to lattices in \(E_{6(-14)}\) and \(E_{7(-25)}\), respectively.  The authors actually entertain the idea that property-\(G(\mathcal{Q}_1, \mathcal{Q}_2)\) might be necessary for Ulam stability~\cite{Glebsky-et-al:asymptotic-cohomology}*{Section~7}.  If that was true, our theorem would thus have the corollary that Ulam stability is not a profinite property.

\medskip
  In Section~\ref{section:if}, we prove the ``if part'' of Theorem~\ref{thm:main-theorem} and in Section~\ref{section:onlyif}, we prove the ``only if'' part.  The authors acknowledge financial support from the German Research Foundation via the Research Training Group ``Algebro-Geometric Methods in Algebra, Arithmetic, and Topology'', DFG 284078965.  The second author is additionally grateful for financial support within the Priority Program ``Geometry at Infinity'', DFG 441848266.  We are indebted to Francesco Fournier-Facio, Ryan Spitler, and the referee for helpful discussions and suggestions.

\section{Proof of Theorem~\ref{thm:main-theorem}---``if part''} \label{section:if}

In this section, we show that higher rank Lie groups isogenous to \(\operatorname{SO}^0(n,2)\) for \(n \ge 6\), \(E_{6(-14)}\), or \(E_{7(-25)}\) exhibit non-profinite second bounded cohomology.  We start with the group \(E_{7(-25)}\).

\medskip
Fix a prime number \(p_0\).  Let \(\mathbf{E_7}\) be the unique simply-connected absolutely almost simple \(\Q\)-split linear algebraic \(\Q\)-group of type \(E_7\).  The center \(Z(\mathbf{E_7}) \cong \mu_2\) is isomorphic to the algebraic group of ``second roots of unity'' \cite{Platonov-Rapinchuk:algebraic-groups}*{Table on p.\,332}, so that \(\mu_2(K) = \{\pm 1\}\) for any field extension \(K/\Q\).  By \cite{Serre:galois-cohomology}*{Section~I.5.7}, the corresponding equivariant short exact sequence of \(\operatorname{Gal}(\Q)\)-groups
\[ 1 \longrightarrow \mu_2 \longrightarrow \mathbf{E_7} \longrightarrow \operatorname{Ad} \mathbf{E_7} \longrightarrow 1 \]
and functoriality yield a commuting diagram of Galois cohomology sets
\[
  \begin{tikzcd}
    {\bigoplus\limits_{p \neq p_0} H^1(\Q_p, \mathbf{E_7})} \arrow[r, "\bigoplus \pi_p"] & {\bigoplus\limits_{p \neq p_0} H^1(\Q_p, \operatorname{Ad} \mathbf{E_7})} \arrow[r, "\bigoplus \Delta_p"] & {\bigoplus\limits_{p \neq p_0} H^2(\Q_p, \mu_2)} \\
    {H^1(\Q, \mathbf{E_7})} \arrow[r, "\pi"] \arrow[u] \arrow[d]               & {H^1(\Q,\operatorname{Ad} \mathbf{E_7})} \arrow[r, "\Delta"] \arrow[u, "f"] \arrow[d, swap, "f_{p_0}"]                & {H^2(\Q, \mu_2)} \arrow[u, "b"] \arrow[d, swap, "b_{p_0}"]      \\
    {H^1(\Q_{p_0}, \mathbf{E_7})} \arrow[r, "\pi_{p_0}"]                       & {H^1(\Q_{p_0},\operatorname{Ad} \mathbf{E_7})} \arrow[r, "\Delta_{p_0}"]                        & {H^2(\Q_{p_0}, \mu_2)}                   
    \end{tikzcd}
  \]
which is exact at the middle term of each row.  The direct sums in the upper row denote the subsets of the cartesian products consisting of elements with all but finitely many coordinates equal to the unit class.  We agree that the infinite prime \(\infty\) with \(\Q_\infty = \R\) is included.  Let us collect some information on this diagram

\begin{proposition} \label{prop:injective-surjective}
  The map \(f\) is surjective and \(\Delta_{p_0}\) has trivial kernel.
\end{proposition}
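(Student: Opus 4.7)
The proposition has two parts; I handle the triviality of $\ker \Delta_{p_0}$ first, which is nearly immediate, and then the surjectivity of $f$, which is the substantial point.

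For $\ker \Delta_{p_0}$: by exactness of the bottom row at the middle term, $\ker \Delta_{p_0}$ equals the image of $\pi_{p_0}$. Since $\mathbf{E_7}$ is simply connected and $\Q_{p_0}$ is non-archimedean, Kneser's theorem~\cite{Kneser:galois} gives $H^1(\Q_{p_0}, \mathbf{E_7}) = 0$, so $\pi_{p_0}$ has trivial image and $\ker \Delta_{p_0}$ is trivial.

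For the surjectivity of $f$, the plan is a three-step diagram chase up the rightmost column, using Brauer-Hasse-Noether as the globalization mechanism. Given a target $(\xi_p)_{p \neq p_0}$ with finite support, I set $c_p = \Delta_p(\xi_p) \in H^2(\Q_p, \mu_2)$. The Brauer-Hasse-Noether exact sequence
\[ 0 \longrightarrow H^2(\Q, \mu_2) \longrightarrow \bigoplus_v H^2(\Q_v, \mu_2) \xrightarrow{\sum_v \operatorname{inv}_v} \tfrac{1}{2}\Z/\Z \longrightarrow 0 \]
allows me to choose $c_{p_0} \in H^2(\Q_{p_0}, \mu_2) \cong \Z/2$ so that the sum of invariants vanishes; the tuple $(c_v)_v$ then lifts to a global class $c \in H^2(\Q, \mu_2)$. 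Next I lift $c$ to $\xi \in H^1(\Q, \Ad \mathbf{E_7})$ with $\Delta(\xi) = c$. The class $c$ is locally in the image of $\Delta_v$ at every place: at each finite prime $p$ the map $\Delta_p$ is surjective because $H^2(\Q_p, \mathbf{E_7}) = 0$ for the simply connected $\mathbf{E_7}$ over a non-archimedean local field, and at $\infty$ I have $c|_\R = \Delta_\infty(\xi_\infty)$ by construction. The Hasse principle for $\mathbf{E_7}$-gerbes, which ultimately rests on the vanishing $\operatorname{Sha}^2(\Q, \mu_2) = 0$ (itself a reformulation of Brauer-Hasse-Noether) combined with Kottwitz's description of $\operatorname{Sha}^1(\Q, \Ad \mathbf{E_7})$, upgrades everywhere-local liftability to a global lift $\xi$.

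Finally I verify $f(\xi) = (\xi_p)_{p \neq p_0}$ place by place. At each finite $p \neq p_0$, Kneser gives injectivity of $\Delta_p$, and $\Delta_p(f_p(\xi)) = c_p = \Delta_p(\xi_p)$ forces $f_p(\xi) = \xi_p$. At $\infty$, the relation $\Delta_\infty(f_\infty(\xi)) = \Delta_\infty(\xi_\infty)$ only tells me that $f_\infty(\xi)$ and $\xi_\infty$ lie in the same $\pi_\infty$-fiber, i.e.\ differ by an element coming from $H^1(\R, \mathbf{E_7})$; by the Kneser-Harder surjection $H^1(\Q, \mathbf{E_7}) \twoheadrightarrow H^1(\R, \mathbf{E_7})$ (whose kernel controls every finite $H^1$, all of which vanish), I can twist $\xi$ by a class in $H^1(\Q, \mathbf{E_7})$ to correct the real place without disturbing the finite localizations. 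The main obstacle is the global-lift step for $c$ via the Hasse principle for gerbes, where the special features of $\mathbf{E_7}$ (trivial outer automorphism group, center $\mu_2$) enter crucially; the rest is formal diagram chasing.
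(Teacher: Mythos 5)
Your treatment of \(\ker \Delta_{p_0}\) coincides with the paper's: exactness of the bottom row together with Kneser's vanishing \(H^1(\Q_{p_0}, \mathbf{E_7}) = 0\). For the surjectivity of \(f\), however, the paper proves nothing directly --- it cites Prasad--Rapinchuk \cite{Prasad-Rapinchuk:isotropic}*{Proposition~1} (implicit in Borel--Harder \cite{Borel-Harder:existence}) --- whereas you attempt to reprove that result. Your skeleton is indeed the standard one: globalize the coboundary classes via Albert--Brauer--Hasse--Noether, absorbing the reciprocity obstruction at the omitted prime \(p_0\); lift the resulting global class \(c \in H^2(\Q,\mu_2)\) to \(H^1(\Q, \Ad \mathbf{E_7})\); then correct the real component by a class of the simply connected group, which leaves the finite localizations untouched because \(H^1(\Q_p, \cdot)\) vanishes there.

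The gap is the middle step, which you yourself flag as the main obstacle. The assertion that a class \(c \in H^2(\Q,\mu_2)\) which is everywhere locally in the image of the coboundary admits a global lift to \(H^1(\Q, \Ad \mathbf{E_7})\) does not follow from what you invoke: the vanishing of \(\operatorname{Sha}^2(\Q,\mu_2)\) and the Hasse principle \(\operatorname{Sha}^1(\Q, \Ad \mathbf{E_7}) = 1\) are injectivity statements and cannot produce a lift, and ``the Hasse principle for \(\mathbf{E_7}\)-gerbes'' is not a citable theorem in this form. If you try to prove the lift by choosing local lifts \(\eta_v\) with \(\Delta_v(\eta_v) = c_v\) and then globalizing the family \((\eta_v)\), you are invoking exactly the surjectivity of the localization map that you set out to prove, so the argument becomes circular. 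The genuine input needed here is the Kneser--Harder/Borel--Harder lifting theorem (equivalently, Borovoi's surjectivity of the abelianization map \(H^1(k, \Ad \mathbf{E_7}) \rightarrow H^2(k, \mu_2)\) over number fields for classes liftable at the archimedean places), which is precisely the content that the paper outsources to \cite{Prasad-Rapinchuk:isotropic}. Two smaller inaccuracies: the surjectivity of \(\Delta_p\) at finite places is a theorem of Kneser about the coboundary map and is not a consequence of a (nonabelian, undefined) ``\(H^2(\Q_p, \mathbf{E_7}) = 0\)''; and in your final correction step one must twist by the simply connected inner form \({}_\xi\mathbf{E_7}\) and use the Hasse principle \(H^1(\Q, {}_\xi\mathbf{E_7}) \cong H^1(\R, {}_\xi\mathbf{E_7})\), not classes of \(\mathbf{E_7}\) itself --- harmless in substance, but as written imprecise.
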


\begin{proof}
  The surjectivity of \(f\) is implicit in the work of Borel--Harder~\cite{Borel-Harder:existence} but can be cited explicitly from Prasad--Rapinchuk \cite{Prasad-Rapinchuk:isotropic}*{Proposition~1}.  The map \(\Delta_{p_0}\) has trivial kernel by exactness of the lower sequence and because \(H^1(\Q_{p_0}, \mathbf{E_7}) = 0\) by a result of M.\,Kneser~\cite{Kneser:galois}.
\end{proof}

\begin{proposition} \label{prop:image}
 The image of \(\pi_\R \colon H^1(\R, \mathbf{E_7}) \longrightarrow H^1(\R, \operatorname{Ad} \mathbf{E_7})\) consists of two elements, corresponding to the real forms \(E_{7(7)}\) and \(E_{7(-25)}\).
\end{proposition}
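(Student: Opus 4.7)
By exactness of the bottom row of the diagram at its middle term, the image of $\pi_\R$ coincides with the kernel of
\[ \Delta_\R\colon H^1(\R,\operatorname{Ad}\mathbf{E_7}) \longrightarrow H^2(\R,\mu_2)\cong\operatorname{Br}(\R)[2]\cong\Z/2. \]
The plan is first to enumerate $H^1(\R,\operatorname{Ad}\mathbf{E_7})$ and then to determine $\Delta_\R$ on each class. Since the Dynkin diagram of $E_7$ has no non-trivial automorphisms, $\operatorname{Aut}(\mathbf{E_7})=\operatorname{Ad}\mathbf{E_7}$, so $H^1(\R,\operatorname{Ad}\mathbf{E_7})$ classifies isomorphism classes of $\R$-forms of $\mathbf{E_7}$. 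Cartan's classification of real simple Lie algebras of type $E_7$ identifies these with the four real forms $E_{7(7)}$, $E_{7(-5)}$, $E_{7(-25)}$, and $E_{7(-133)}$.

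For the second step, I would identify $\Delta_\R([G])$ with the classical Tits invariant of the inner form $G$: via functoriality of the connecting map for the inclusion $\mu_2\hookrightarrow\mathbb{G}_m$, it is the Brauer class over $\R$ of the Tits algebra attached to the minuscule $56$-dimensional representation $V$ of $\mathbf{E_7}$, on which the center $\mu_2$ acts by its non-trivial character. Since $V$ is symplectic, and hence self-dual over $\C$, it is either of real or of quaternionic type relative to $G$, and the Tits algebra is correspondingly $\R$ or $\mathbb{H}$. A direct check, for instance by restricting $V$ to a maximal compact subgroup of $G$ and computing Frobenius--Schur indicators on the resulting summands, shows that $V$ is of real type for $E_{7(7)}$ and $E_{7(-25)}$ and of quaternionic type for $E_{7(-5)}$ and $E_{7(-133)}$. (For $E_{7(-5)}$ one may decompose $V = (12,2)\oplus(32',1)$ under $\operatorname{Spin}(12)\cdot\operatorname{SU}(2)$, each summand being quaternionic; the other cases are analogous.) Consequently $\ker\Delta_\R=\{[E_{7(7)}],[E_{7(-25)}]\}$, which is the image of $\pi_\R$.

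The main technical obstacle is the Schur-indicator computation for the two intermediate real forms $E_{7(-25)}$ and $E_{7(-5)}$, which rests on the explicit restriction of the minuscule representation to the respective maximal compact subgroups; in principle this can be looked up from standard representation-theoretic tables. An alternative that stays internal to Galois cohomology is to evaluate $\Delta_\R$ on each class from the definition of the connecting map: pick a Cartan involution as a representative cocycle $\xi_\sigma\in\operatorname{Ad}\mathbf{E_7}(\C)$ for each real form, choose a lift $\widetilde{\xi}_\sigma\in\mathbf{E_7}(\C)$, and read off the obstruction $\widetilde{\xi}_\sigma\,\sigma(\widetilde{\xi}_\sigma)\in\mu_2$ from explicit matrix representatives.
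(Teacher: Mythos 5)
Your route is genuinely different from the paper's: you work at the right-hand end of the exact sequence, identifying \(\operatorname{im} \pi_\R\) with \(\ker \Delta_\R\) and evaluating \(\Delta_\R\) on the four classes of \(H^1(\R, \operatorname{Ad}\mathbf{E_7})\) via Tits algebras of the \(56\)-dimensional minuscule representation, whereas the paper works at the left-hand end: it uses the connecting map \(\operatorname{Ad}\mathbf{E_7}(\R) \to H^1(\R,\mu_2)\), connectedness of \(\mathbf{E_7}(\R)\) versus the two components of \(\operatorname{Ad}\mathbf{E_7}(\R)\), the fact that \(H^1(\R,\mathbf{E_7})\) has exactly two elements (Adams--Ta\"{\i}bi), and a twisting argument to pin down the nontrivial image class as \(E_{7(-25)}\). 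Your framework (exactness for a central \(\mu_2\), \(\operatorname{Aut} = \operatorname{Ad}\) for \(E_7\), the identification of \(\Delta_\R\) with the Brauer class of the Tits algebra of the \(56\), and the real/quaternionic dichotomy over \(\R\)) is sound, and your multiplicity-one argument does correctly give quaternionic type, hence \(\Delta_\R \neq 1\), for \(E_{7(-5)}\) (via the summand \((12,2)\)) and for the compact form.

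However, there is a genuine gap at the one case that carries all the content: \(E_{7(-25)}\). Your proposed test --- restrict \(V\) to a maximal compact subgroup and read off Frobenius--Schur indicators of the summands --- cannot certify \emph{real} type. For \(E_{7(-25)}\) the maximal compact is \(E_6 \cdot U(1)\) and \(V\) restricts to \(27 \oplus \overline{27} \oplus 1 \oplus 1\) with nontrivial \(U(1)\)-charges: every summand is of complex type, all indicators vanish, and the implicit criterion ``\(V|_K\) realizable over \(\R\) implies \(V\) real for \(G(\R)\)'' is false in general. A counterexample is the vector representation of \(\operatorname{SO}^*(2n)\): it is quaternionic (this is exactly what the paper exploits in its \(\operatorname{SO}^*(2n)\) proposition), yet its restriction to \(U(n)\) is \(\C^n \oplus \overline{\C}{}^n\), which is realizable over \(\R\). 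So your computation only yields \(\ker\Delta_\R \subseteq \{E_{7(7)}, E_{7(-25)}\}\), and without showing \(\Delta_\R(E_{7(-25)}) = 1\) you cannot conclude that the image has two elements, which is the point of the proposition. To close the gap you need an actual proof that the \(56\) is of real type for \(E_{7(-25)}\) --- for instance via the Freudenthal triple system model, which realizes \(E_{7(-25)}\) inside \(\operatorname{Sp}(56,\R)\) acting on a \(56\)-dimensional \emph{real} vector space --- or else replace this step by the paper's argument: \(\ker\pi_\R = 1\) because \(\delta\) surjects onto \(H^1(\R,\mu_2) \cong \Z/2\) (connected versus disconnected real points), and \(|H^1(\R,\mathbf{E_7})| = 2\), after which one still must identify the nontrivial image point, e.g.\ by the twisting/component argument. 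Your alternative suggestion of an explicit cocycle computation is fine in principle but is precisely the work left undone.
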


\begin{proof}
    This was already observed as part of the investigation in~\cite{Kammeyer-Spitler:chevalley} but let us give a direct argument for the convenience of the reader.  By \cite{Serre:galois-cohomology}*{Section~I.5.7}, the map \(\pi_\R\) sits in the exact sequence
    \[ \mathbf{E_7}(\R) \xrightarrow{\ p \ } \operatorname{Ad} \mathbf{E_7}(\R) \xrightarrow{\ \delta\ } H^1(\R, \mu_2) \longrightarrow H^1(\R, \mathbf{E_7}) \xrightarrow{\pi_{\R}} H^1(\R, \operatorname{Ad} \mathbf{E_7}). \]
    By \cite{Serre:galois-cohomology}*{Corollary~2, Section~I.5.6}, the map \(\delta\) is a group homomorphism, hence \(\operatorname{im} \delta \cong \operatorname{Ad} \mathbf{E_7}(\R) / p(\mathbf{E_7}(\R))\) by exactness.  But the \(\R\)-points of a simply-connected semisimple \(\R\)-group form a connected Lie group~\cite{Margulis:discrete-subgroups}*{Remark~(2), p.\,52}, whereas \(\operatorname{Ad} \mathbf{E_7}(\R)\) has two connected components according to~\cite{Adams-Taibi:galois-cohomology}*{Table~5, p.\,1095}.  Since \(H^1(\R, \mu_2) \cong \Z / 2\), it follows that \(\delta\) is surjective so \(\operatorname{ker} \pi_\R = 1\) by exactness.  By~\cite{Adams-Taibi:galois-cohomology}*{Table~3, p.\,1094}, the set \(H^1(\R, \mathbf{E_7})\) has two elements, so \(\pi_\R\) is injective.  If \([a] \in H^1(\R, \mathbf{E_7})\) denotes the non-trivial class, we conclude from \cite{Serre:galois-cohomology}*{Corollary~2, Section~I.5.5} that the \({}_a \operatorname{Ad} \mathbf{E_7}(\R)\)-action on \(H^1(\R, \mu_2) \cong \Z/2\) is transitive, hence \({}_a \operatorname{Ad} \mathbf{E_7}(\R)\) must be disconnected.  By \cite{Adams-Taibi:galois-cohomology}*{Table~5, p.\,1095}, the hermitian form \(E_{7(-25)}\) is the only non-split disconnected form, so \({}_a \operatorname{Ad} \mathbf{E_7}(\R)\) is of type \(E_{7(-25)}\).  This shows \(\pi_\R([a]) \in H^1(\R, \operatorname{Ad} \mathbf{E_7})\) corresponds to the real form \(E_{7(-25)}\).  Of course, the unit class \(1 \in H^1(\R, \operatorname{Ad} \mathbf{E_7})\) corresponds to the split form \(E_{7(7)}\).
\end{proof}

  The surjectivity of \(f\) according to Proposition~\ref{prop:injective-surjective} lets us find classes \(\alpha, \beta \in H^1(\Q, \operatorname{Ad} \mathbf{E_7})\) such that both \(\alpha\) and \(\beta\) split at all finite primes \(p\) except possibly at \(p_0\) and such that \(\alpha\) corresponds to the real form \(E_{7(-25)}\) at \(\infty\), whereas \(\beta\) corresponds to the real form \(E_{7(7)}\) at \(\infty\).

  \begin{proposition} \label{prop:same-localization}
    Both \(\alpha\) and \(\beta\) also split at \(p_0\). 
  \end{proposition}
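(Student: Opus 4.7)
The plan is to use the diagram to reduce the claim to a Brauer reciprocity computation. The argument for $\alpha$ and $\beta$ is the same, so I describe it only for $\alpha$.

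By Proposition~\ref{prop:injective-surjective}, the map $\Delta_{p_0}$ has trivial kernel, so it suffices to prove $\Delta_{p_0}(f_{p_0}(\alpha))=1$. Commutativity of the lower-right square of the diagram rewrites this as $b_{p_0}(\Delta(\alpha))=0$ in $H^2(\Q_{p_0},\mu_2)$, so the task is to control the localization of the class $\Delta(\alpha)\in H^2(\Q,\mu_2)$ at $p_0$.

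I would then compute the images of $\Delta(\alpha)$ at every \emph{other} place. For each finite $p\neq p_0$, the construction of $\alpha$ gives $f_p(\alpha)=1$, hence $\Delta_p(f_p(\alpha))=0$, and commutativity of the upper-right square yields $b_p(\Delta(\alpha))=0$. At the infinite place, $f_\infty(\alpha)$ is the class corresponding to $E_{7(-25)}$, which lies in $\operatorname{im}\pi_\R$ by Proposition~\ref{prop:image}; exactness of the real row at the middle term then forces $\Delta_\R(f_\infty(\alpha))=0$, so $b_\infty(\Delta(\alpha))=0$ by commutativity.

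The final step is to invoke the Albert--Brauer--Hasse--Noether reciprocity theorem. Via the Kummer sequence, $H^2(\Q,\mu_2)$ and each $H^2(\Q_v,\mu_2)$ embed into the corresponding Brauer groups, and reciprocity asserts that the sum of the local invariants of any global Brauer class vanishes in $\Q/\Z$. Applied to $\Delta(\alpha)$, whose local invariants are zero at every place outside $p_0$, this forces the invariant at $p_0$ to vanish as well, and since $\operatorname{inv}_{p_0}$ is an isomorphism at a finite prime, $b_{p_0}(\Delta(\alpha))=0$ as required. The argument for $\beta$ is identical once we observe that $f_\infty(\beta)$ is the trivial class corresponding to the split form $E_{7(7)}$, which is automatically in $\operatorname{im}\pi_\R$. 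The only non-formal input is Brauer reciprocity; everything else is a diagram chase built on Propositions~\ref{prop:injective-surjective} and~\ref{prop:image}.
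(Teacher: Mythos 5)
Your proof is correct and follows essentially the same route as the paper: the same diagram chase based on Propositions~\ref{prop:injective-surjective} and~\ref{prop:image}, with the Albert--Brauer--Hasse--Noether theorem as the only arithmetic input. The only (cosmetic) difference is that the paper deduces the global triviality of \(\Delta(\alpha)\) from the injectivity of \(b\), whereas you work directly with local invariants and reciprocity to kill the component at \(p_0\); both reduce to the same class field theory fact.
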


  \begin{proof}
    Proposition~\ref{prop:image} and exactness of the upper sequence in the above diagram show that \(\bigoplus \Delta_p(f(\alpha)) = \bigoplus \Delta_p(f(\beta)) = 1\).  By commutativity, we have \(b(\Delta(\alpha)) = b(\Delta(\beta)) = 1\).  The map \(b\) is injective as a special case of~\cite{Prasad-Rapinchuk:isotropic}*{Theorem~3.(2)}.  In fact, the injectivity is an immediate consequence of the extended Albert--Brauer--Hasse--Noether theorem from global class field theory, stating that we have a short exact sequence
    \begin{equation} \label{eq:brauer} 1 \longrightarrow \operatorname{Br}(\Q) \longrightarrow \bigoplus_p \operatorname{Br}(\Q_p) \xrightarrow{\ s \ } \Q/\Z \longrightarrow 1 \end{equation}
    where \(s\) sums up local invariants.  Indeed, \(H^2(\Q, \mu_2)\) is the subgroup \(\operatorname{Br}_2(\Q)\) of the Brauer group \(\operatorname{Br}(\Q) = H^2(\Q, \mathbf{GL_1})\) consisting of \mbox{order} two elements.  It follows that \(\Delta(\alpha) = \Delta(\beta) = 1\) so \(b_{p_0}(\Delta(\alpha)) = b_{p_0}(\Delta(\beta)) = 1\).  Since \(\Delta_{p_0}\) has trivial kernel by Proposition~\ref{prop:injective-surjective}, commutativity of the right lower square gives \(f_{p_0}(\alpha) = f_{p_0}(\beta) = 1\).\end{proof}

  As the Dynkin diagram of type \(E_7\) comes with no symmetries, the set \(H^1(\Q, \operatorname{Ad} \mathbf{E_7})\) classifies all \(\Q\)-forms of type \(E_7\).  The upshot of Proposition~\ref{prop:same-localization} is that the simply-connected \(\Q\)-forms \(\mathbf{G_1}\) and \(\mathbf{G_2}\) defined by \(\alpha\) and \(\beta\), respectively, are split and thus isomorphic over \(\Q_p\) for all finite primes \(p\).  It then follows from \cite{Kammeyer-Kionke:adelic}*{Lemma~2.5 and Lemma~2.6} that we also have an isomorphism \(\mathbf{G_1} \cong \mathbf{G_2}\) of group schemes over the finite adele ring \(\mathbb{A}^f_\Q\).  Moreover, neither \(\mathbf{G_1}\) nor \(\mathbf{G_2}\) is topologically simply-connected at the infinite place.  Indeed, we have \(Z(\mathbf{G_1}(\R)) \cong Z(\mathbf{G_2}(\R)) \cong \{ \pm 1 \}\).  But according to~\cite{Onishchik-Vinberg:lie-groups}*{Table~10, p.\,321}, the simply-connected real Lie group of type \(E_{7(7)}\) has cyclic center of order four while the simply-connected real Lie group of type \(E_{7(-25)}\) has infinite cyclic center. (The latter is actually a consequence of \(E_{7(-25)}\) giving rise to a hermitian symmetric space.)  By~\cite{Prasad-Rapinchuk:metaplectic-kernel}*{Main Theorem}, this implies that the metaplectic kernels of \(\mathbf{G_1}\) and \(\mathbf{G_2}\) are trivial.  The surjectivity result for the map \(f\) comes with an additional statement on the existence of isotropic preimages~\cite{Prasad-Rapinchuk:isotropic}*{Theorem~1\,(iii)} which allows us to assume that \(\operatorname{rank}_\Q \mathbf{G_1} = 3\) and \(\operatorname{rank}_\Q \mathbf{G_2} = 7\).  Therefore, the centrality of the congruence kernels of \(\mathbf{G}_1\) and \(\mathbf{G_2}\) follows from~\cite{Raghunathan:csp}.  Together with \cite{Platonov-Rapinchuk:algebraic-groups}*{Theorem~9.1 and Theorem~9.15}, we conclude that the congruence kernels of \(\mathbf{G_1}\) and \(\mathbf{G_2}\) are in fact trivial.

  \medskip
  Let \(\Gamma_0\) and \(\Lambda_0\) be arithmetic subgroups of \(\mathbf{G_1}\) and \(\mathbf{G_2}\), respectively, which we may assume intersect the center trivially.  Since the congruence kernel of \(\mathbf{G_1}\) is trivial, the profinite completion \(\widehat{\Gamma_0}\) agrees with the congruence completion \(\overline{\Gamma_0}\).  The latter is an open subgroup of \(\mathbf{G_1}(\mathbb{A}^f_\Q)\) by strong approximation.  Similarly, under the isomorphism \(\mathbf{G_1}(\mathbb{A}^f_\Q) \cong \mathbf{G_2}(\mathbb{A}^f_\Q)\), the group \(\widehat{\Lambda_0} = \overline{\Lambda_0}\) is embedded as another open subgroup in \(\mathbf{G_1}(\mathbb{A}^f_\Q)\).  We denote the open intersection of these two open subgroups by \(U \le \mathbf{G_1}(\mathbb{A}^f_\Q)\).  Then by \cite{Ribes-Zalesskii:profinite-groups}*{Proposition 3.2.2, p.\,80, and Lemma~3.1.4, p.\,77}, the groups \(\Gamma = \Gamma_0 \cap U\) and \(\Lambda = \Lambda_0 \cap U\) have finite index in \(\Gamma_0\) and \(\Lambda_0\), respectively, and \(\widehat{\Gamma} \cong U \cong \widehat{\Lambda}\).

  \medskip
  By the Borel--Harish-Chandra Theorem~\cite{Margulis:discrete-subgroups}*{Theorem~I.3.2.7, p.\,63}, \(\Gamma\) is a lattice in the Lie group \(\mathbf{G_1}(\R)\) while \(\Lambda\) is a lattice in \(\mathbf{G_2}(\R)\).  Since \(\Gamma\) does not meet the center of \(\mathbf{G_1}\), \(\Gamma\) is also a lattice in any central quotient of \(\mathbf{G_1}(\R)\).  As we explained in the introduction, the Burger--Monod--Shalom theorem gives \(H^2_b(\Gamma; \R) \cong \R\) and \(H^2_b(\Lambda; \R) \cong 0\) because \(\mathbf{G_1}(\R)\) has type \(E_{7(-25)}\) whereas \(\mathbf{G_2}(\R)\) has type \(E_{7(7)}\).  This completes the proof that Lie groups of type \(E_{7(-25)}\) exhibit non-profinite second bounded cohomology.

  \begin{remark}
    We are grateful to the anonymous referee who suggested to us the following alternative way to construct a \(\Q\)-form \(\mathbf{G_1}\) of type \(E_7\) which has type \(E_{7(-25)}\) at the real place and splits at all finite places.  Consider the standard quadratic form \(q = x_1 ^2 + \cdots + x_8^2\) of rank eight.  It corresponds to an element \(\alpha_q \in H^1(\Q, \mathbf{SO}(4,4))\) because the discriminant of \(q\) and the standard form of signature \((4,4)\) are both trivial.  The boundary map \(\delta^2 \colon H^1(\Q, \mathbf{SO}(4,4)) \longrightarrow \operatorname{Br}_2(\Q)\) associated with the short exact sequence
    \[ 1 \longrightarrow \mu_2 \longrightarrow \mathbf{Spin}(4,4) \longrightarrow \mathbf{SO}(4,4) \longrightarrow 1 \]
    is given by the Hasse--Witt invariant~\cite{Serre:galois-cohomology}*{III.3.2.b), p.\,141} so that \(\delta^2(\alpha_q) = w_2(q) = 1\).  Thus \(\alpha_q\) has a preimage \(\beta_q \in H^1(\Q, \mathbf{Spin}(4,4))\).  Via the obvious Dynkin diagram inclusion \(D_4 \subset E_7\), the class \(\beta_q\) maps to a class in \(H^1(\Q, \mathbf{E_7})\).  The image in \(H^1(\Q, \operatorname{Ad} \mathbf{E_7})\) thus defines a \(\Q\)-form \(\mathbf{G_1}\) of type \(E_7\) which splits at every finite place and has Satake--Tits index
    \begin{center}
      \begin{tikzpicture}
        \draw (0,0) -- (5, 0);
        \foreach \x in {0, ..., 5}{
          \draw[fill] (\x,0) circle (1.5pt);
        }
        \draw[fill] (3,0) -- (3,1) circle (1.5pt);

        \foreach \x in {0, 1, 5}{
          \draw (\x,0) circle (5pt);
        }
      \end{tikzpicture}
    \end{center}
    at the real place by construction.  This shows that \(\mathbf{G_1}(\R)\) is the Lie group \(E_{7(-25)}\).
  \end{remark}
  \medskip

  We now turn our attention to the group \(E_{6(-14)}\).  Again, we fix a rational prime \(p_0\).  Let \(\mathbf{{}^2 E_6}\) be any simply-connected absolutely almost simple quasisplit \(\Q\)-group which neither splits at \(p_0\) nor at \(\infty\).  Such a form exists according to~\cite{Borel-Harder:existence}*{Proposition p.\,58}.  The Hasse principle for adjoint groups~\cite{Platonov-Rapinchuk:algebraic-groups}*{Theorem~6.22, p.\,336} shows that the diagonal map
  \[ f \colon H^1(\Q, \operatorname{Ad} \mathbf{{}^2 E_6}) \longrightarrow \bigoplus_{p \le \infty} H^1(\Q_p, \operatorname{Ad} \mathbf{{}^2 E_6}) \]
  is injective.  But \(f\) is also surjective by~\cite{Prasad-Rapinchuk:isotropic}*{Proposition~1} because at~\(p_0\), the group \(\mathbf{{}^2 E_6}\) has no inner twist, meaning the set \(H^1(\Q_{p_0}, \operatorname{Ad} \mathbf{{}^2 E_6})\) is trivial~\cite{Platonov-Rapinchuk:algebraic-groups}*{Proposition~6.15.(1), p.\,334}.

  The set \(H^1(\R, \operatorname{Ad} \mathbf{{}^2 E_6})\) has three elements corresponding to the quasisplit form \(E_{6(2)}\), the hermitian form \(E_{6(-14)}\), and the compact form \(E_{6(-78)}\) as we infer one more time from~\cite{Adams-Taibi:galois-cohomology}*{Table~3, p.\,1094 and remark in 10.3}.  Let \(\alpha, \beta \in H^1(\Q, \operatorname{Ad} \mathbf{{}^2 E_6})\) be the unique classes corresponding to \(E_{6(-14)}\) and \(E_{6(2)}\) at the infinite place, respectively, and which are trivial at all finite places.  We denote the simply-connected \(\Q\)-forms of \(\mathbf{{}^2 E_6}\) corresponding to \(\alpha\) and \(\beta\) by \(\mathbf{G_1}\) and \(\mathbf{G_2}\).  By construction, \(\mathbf{G_1}\) and \(\mathbf{G_2}\) are isomorphic over \(\Q_p\) for all finite primes \(p\).  From the tables~\cite{Platonov-Rapinchuk:algebraic-groups}*{p.\,332} and~\cite{Onishchik-Vinberg:lie-groups}*{Table~10, p.\,321}, we infer that the centers of \(\mathbf{G_1}(\R)\) and \(\mathbf{G_2}(\R)\) have order three whereas the centers of the simply-connected real Lie groups of type \(E_{6(-14)}\) and \(E_{6(2)}\) are infinite cyclic and order six, respectively.  So again, the metaplectic kernels of \(\mathbf{G_1}\) and \(\mathbf{G_2}\) are trivial.  It follows anew from~\cite{Prasad-Rapinchuk:isotropic}*{Theorem~1\,(iii)} that \(\operatorname{rank}_\Q \mathbf{G_1} = 2\) and \(\operatorname{rank}_\Q \mathbf{G_2} = 4\) so that both congruence kernels are central by~\cite{Raghunathan:csp}.  Finally the case of type \({}^2 E_6\), which was still excluded in~\cite{Platonov-Rapinchuk:algebraic-groups}*{Theorem~9.1, p.\,512}, was meanwhile settled by P.\,Gille~\cite{Gille:kneser-tits}.  So \cite{Platonov-Rapinchuk:algebraic-groups}*{Theorem~9.15} shows that both \(\mathbf{G}_1\) and \(\mathbf{G}_2\) have trivial congruence kernel.  With these remarks, the rest of the argument goes through as before and we conclude that \(E_{6(-14)}\) exhibits non-profinite second bounded cohomology.

  \bigskip
  For the group \(\operatorname{SO}^0(6,2)\), we can actually argue similarly.  We let \(\mathbf{{}^3 D_4}\) be a simply-connected absolutely almost simple quasisplit \(\Q\)-group that localizes to the quasisplit triality form of type \({}^3 D_4\) at a fixed prime \(p_0\) and that splits at \(\infty\).  Note that the \(\Q\)-group \(\mathbf{{}^3 D_4}\) can either have outer type \({}^3 D_4\) or \({}^6 D_4\).  Since again \(H^1(\Q_{p_0}, \operatorname{Ad} \mathbf{{}^3 D_4})\) is trivial according to~\cite{Platonov-Rapinchuk:algebraic-groups}*{Proposition~6.15.(1), p.\,334}, we have a pointed bijection
  \[ f \colon H^1(\Q, \operatorname{Ad} \mathbf{{}^3 D_4}) \longrightarrow \bigoplus_{p \le \infty} H^1(\Q_p, \operatorname{Ad} \mathbf{{}^3 D_4}). \]
  Hence there exist \(\Q\)-forms \(\mathbf{G_1}\) and \(\mathbf{G_2}\) of \(\mathbf{{}^3 D_4}\) that localize to the inner forms \(\operatorname{SO}^0(6,2)\) and \(\operatorname{SO}^0(4,4)\) at the real place, respectively, and to the trivial inner twist of \(\mathbf{{}^3 D_4}\) at all finite places so that they are isomorphic over \(\Q_p\) for all \(p\).  We have \(\operatorname{rank}_\Q \mathbf{G_1} = 1\) and \(\operatorname{rank}_\Q \mathbf{G_2} = 2\), so both groups have central congruence kernel, this time by~\cite{Raghunathan:csp2}.  The tables~\cite{Platonov-Rapinchuk:algebraic-groups}*{p.\,332} and~\cite{Onishchik-Vinberg:lie-groups}*{Table~10, p.\,320} show that \(\mathbf{G_1}(\R)\) and \(\mathbf{G_2}(\R)\) have trivial center whereas the corresponding topological universal coverings have center isomorphic to \(\Z \oplus \Z / 2\) and \((\Z/2)^3\), respectively.  This implies that the metaplectic kernels of \(\mathbf{G_1}\) and \(\mathbf{G_2}\) are trivial and so are the congruence kernels.  Hence we can once more construct profinitely isomorphic lattices in \(\mathbf{G_1}(\R)\) and \(\mathbf{G_2}(\R)\) as we did in type \(E_7\) and we conclude that \(\operatorname{SO}^0(6,2) \approx \operatorname{SO}^*(8)\) exhibits non-profinite second bounded cohomology.

\bigskip
Finally, we observe that Lemma~\ref{lemma:not-profinite} generalizes effortlessly to the groups \(\Gamma = \operatorname{Spin}(n,2)(\Z)\) and \(\Lambda = \operatorname{Spin}(n-4,6)(\Z)\) for \(n \ge 7\).  Let \(\Gamma_0 \le \Gamma\) be a finite index subgroup which intersects the center of \(\operatorname{Spin}(n,2)(\R)\) trivially.  Setting \(\Lambda_0 = \Lambda \cap \overline{\Gamma_0} \subset \widehat{\Gamma} \cong \widehat{\Lambda}\), we have \(\widehat{\Gamma_0} \cong \widehat{\Lambda_0}\).  The group \(\Gamma_0\) is a lattice in every quotient group of \(\operatorname{Spin}(n,2)(\R)\) by a central subgroup so that all Lie groups isogenous to \(\operatorname{SO}^0(n,2)\) exhibit non-profinite second bounded cohomology for \(n \ge 7\).

  \section{Proof of Theorem~\ref{thm:main-theorem}---``only if part''} \label{section:onlyif}

  In this section, we show that the remaining higher rank Lie groups defining hermitian symmetric spaces
  \begin{gather*}
    \operatorname{SU}(n,m) \ (n, m \ge 2), \ \operatorname{SO}^0(5,2), \ \operatorname{SO}^*(2n) \ (n \ge 5), \ \operatorname{Sp}(n, \R) \ (n \ge 2)
  \end{gather*}
  
  do not exhibit non-profinite second bounded cohomology.  Recall that the group \(\operatorname{SO}^0(3,2)\) is isogenous to \(\operatorname{Sp}(2, \R)\) and the group \(\operatorname{SO}^0(4,2)\) is isogenous to \(\operatorname{SU}(2,2)\).

  \medskip
  As preparation, let \(k\) and \(l\) be totally real number fields and let \(\mathbf{G}\) and \(\mathbf{H}\) be simply-connected absolutely almost simple groups defined over \(k\) and \(l\), respectively.  Assume that \(\mathbf{G}\) is anisotropic at all infinite places of \(k\) except one which we call \(v\) and that \(\mathbf{H}\) is anisotropic at all infinite places of \(l\) except one which we call \(w\).  Suppose moreover that \(\operatorname{rank}_{k_v} \mathbf{G} \ge 2\) and \(\operatorname{rank}_{l_w} \mathbf{H} \ge 2\) and that there exist arithmetic subgroups \(\Gamma \le \mathbf{G}(k)\) and \(\Lambda \le \mathbf{H}(l)\) such that \(\widehat{\Gamma} \cong \widehat{\Lambda}\).  By adelic superrigidity~\cite{Kammeyer-Kionke:adelic}*{Theorem~3.4}, we have an isomorphism \(j \colon \mathbb{A}^f_l \rightarrow \mathbb{A}^f_k\) of topological rings and a group scheme isomorphism \(\eta \colon \mathbf{G} \times_k \mathbb{A}^f_k \longrightarrow \mathbf{H} \times_l \mathbb{A}^f_l\) over \(j\).  By the proof of~\cite{Klingen:arithmetical-similarities}*{Proposition~2.5\,(a), p.\,238}, the isomorphism \(j\) induces a bijection \(u \mapsto u'\) of the finite places of \(k\) and \(l\) and isomorphisms \(k_u \cong l_{u'}\).  Correspondingly, the isomorphism \(\eta\) over \(j\) splits into a family of isomorphisms \(\mathbf{G}(k_u) \cong \mathbf{H}(l_{u'})\).  In particular, \(\mathbf{G}\) and \(\mathbf{H}\) have the same Cartan Killing type.

  \begin{proposition} \label{prop:inner-twists}
    The \(\R\)-groups \(\mathbf{G}_v\) and \(\mathbf{H}_w\) are inner twists of one another.
  \end{proposition}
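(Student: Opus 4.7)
The proposition is immediate when the outer automorphism group $\mathrm{Out}(\Delta)$ of the common Dynkin diagram $\Delta$ is trivial---as for the types $B_n$ and $C_n$ underlying the only-if groups $\mathrm{SO}^0(5,2)$ and $\mathrm{Sp}(n,\mathbb{R})$---because then every form is inner. So the plan is to handle the remaining relevant cases, where $\mathrm{Out}(\Delta)\cong\mathbb{Z}/2$, namely type $A_n$ ($n\geq 2$) for $\mathrm{SU}(n,m)$ and type $D_n$ ($n\geq 5$) for $\mathrm{SO}^*(2n)$. In these cases the outer class of a $k$-form $\mathbf{G}$ is classified by a square class $[a_\mathbf{G}]\in k^*/k^{*2}$ via Kummer theory, trivial precisely when $\mathbf{G}$ is inner; I would introduce $[a_\mathbf{H}]\in l^*/l^{*2}$ analogously for $\mathbf{H}$.

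First I would use $\eta$ to match the local $*$-actions at finite places. The restriction of $\eta$ at each finite place gives an isomorphism $\mathbf{G}_{k_u}\cong\mathbf{H}_{l_{u'}}$ over the field isomorphism $k_u\cong l_{u'}$, inducing a Galois-equivariant identification of the Dynkin diagrams and hence of the local $*$-actions. Consequently $a_\mathbf{G}$ is a square in $k_u^*$ if and only if $a_\mathbf{H}$ is a square in $l_{u'}^*$. Next I would note that at each archimedean place $v'\neq v$ the anisotropic group $\mathbf{G}_{v'}$ is the compact real form of the given Cartan--Killing type, whose outer class is a fixed type invariant---outer for $A_n$ and for $D_n$ with $n$ odd, inner for $D_n$ with $n$ even---and similarly for $\mathbf{H}$ at every $w'\neq w$.

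The heart of the proof is a Hilbert-reciprocity parity count. Writing $s(a_\mathbf{G})$ for the number of real places of $k$ at which $a_\mathbf{G}$ is negative and applying the product formula $\prod_v(-1,a_\mathbf{G})_v=1$, the real-place factors contribute $(-1)^{s(a_\mathbf{G})}$, so $(-1)^{s(a_\mathbf{G})}=\prod_u(-1,a_\mathbf{G})_u$. Because $j$ is a ring isomorphism it sends $-1$ to $-1$ and respects the local square classes from the previous step, so the right-hand side equals $\prod_{u'}(-1,a_\mathbf{H})_{u'}$ and thus $s(a_\mathbf{G})\equiv s(a_\mathbf{H})\pmod 2$. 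By Klingen's theorem, $\mathbb{A}^f_k\cong\mathbb{A}^f_l$ makes $k$ and $l$ arithmetically equivalent, in particular $[k:\mathbb{Q}]=[l:\mathbb{Q}]$, so the compact-form contributions of the anisotropic places to $s(a_\mathbf{G})$ and $s(a_\mathbf{H})$ are equal and cancel. The remaining parity equation forces $a_\mathbf{G}$ to be negative at $v$ exactly when $a_\mathbf{H}$ is negative at $w$---the desired equality of outer classes---which together with the common Cartan--Killing type makes $\mathbf{G}_v$ and $\mathbf{H}_w$ inner twists of one another.

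The main obstacle is not any single step but the careful orchestration of the inputs: verifying that $\eta$ genuinely transports the $*$-action (so that the square-class matching at finite places is literally correct), that the local $*$-action at an anisotropic real place is the compact-form invariant recorded in the Helgason/Tits tables, and that Klingen's theorem does yield $[k:\mathbb{Q}]=[l:\mathbb{Q}]$ directly from $j$. With these in hand, the Hilbert-reciprocity parity computation is essentially automatic.
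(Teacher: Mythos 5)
Your argument is sound for the types it treats, and its core mechanism is genuinely different from the paper's. Both proofs start the same way: the adelic isomorphism \(\eta\) over \(j\) identifies the local \(*\)-actions of \(\mathbf{G}\) and \(\mathbf{H}\) at corresponding finite places, and both exploit that at the anisotropic real places the localization is the compact form, whose inner/outer status is a type invariant. The paper then transfers the outer classes to field extensions: \(\pi_*\alpha\) and \(\pi_*\beta\) correspond to extensions \(K/k\) and \(L/l\) with \(\mathbb{A}^f_K \cong \mathbb{A}^f_L\), and Klingen's results on arithmetical equivalence give that \(K\) and \(L\) have the same number of real embeddings, hence that \(\mathbf{G}\) and \(\mathbf{H}\) are outer at the same number of real places. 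You instead stay inside \(k\) and \(l\) and apply the product formula for the Hilbert symbol \((-1,a)\) to the Kummer representatives \(a_{\mathbf{G}}\), \(a_{\mathbf{H}}\), needing from Klingen only \([k:\Q]=[l:\Q]\); this replaces the arithmetic-equivalence input for \(K\) and \(L\) by an elementary reciprocity computation and yields the parity statement at \(v\) and \(w\), which, since each place contributes \(0\) or \(1\), gives the same conclusion. The bookkeeping (outer at a real place iff the square class is negative there, compact form outer for \(A_n\), \(n\ge 2\), and for \(D_n\) with \(n\) odd) is correct.

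Two caveats. First, your finite-place step as written only records that \(a_{\mathbf{G}}\) is a square in \(k_u\) iff \(a_{\mathbf{H}}\) is a square in \(l_{u'}\), but the reciprocity step needs the full correspondence of square classes under \(k_u \cong l_{u'}\): the symbol \((-1,a)_u\) is not determined by squareness alone (over \(\Q_2\), \((-1,-1)_2 \ne (-1,5)_2\)). The Galois-equivariant identification of the local \(*\)-actions does give exactly this stronger statement, so the fix is only to say it. Second, the proposition is stated (and proved in the paper) for arbitrary Cartan--Killing type, whereas your case split covers only trivial \(\operatorname{Out}\), \(A_n\), and \(D_n\) with \(n\ge 5\). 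The omission of \(E_6\) is harmless, as the identical \(\Z/2\) argument applies; but type \(D_4\) requires genuine extra care, since \(\operatorname{Sym}\Delta \cong S_3\) and the Kummer-theoretic description of the outer class fails as stated. The paper handles this by classifying the corresponding Galois extensions of degree \(2\), \(3\), or \(6\) and using that all nontrivial homomorphisms \(\operatorname{Gal}(\R) \to S_3\) are conjugate; your reciprocity argument would also survive there after composing the outer class with the sign map \(S_3 \to \Z/2\). Since the proposition is only invoked for types \(A_n\), \(B_3\), \(C_n\), and \(D_n\) with \(n \ge 5\), your restriction suffices for the paper's applications, but it does not establish the statement in the generality in which it is formulated.
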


  \begin{proof}
    The case \(k = l = \Q\) was handled in \cite{Kammeyer-et-al:profinite-invariants}*{Proposition~2.7} (and it is apparent that the same arguments give the case of \(k=l\) if \(v=w\)).  For the general case, we argue as follows.  Let \(\mathbf{G_0}\) be the up to \(\Q\)-isomorphism unique \(\Q\)-split simply-connected \(\Q\)-group of the same Cartan Killing type as \(\mathbf{G}\) and \(\mathbf{H}\).  Let \(\mathbf{T} \subset \mathbf{G_0}\) be a maximal \(\Q\)-split torus, pick a set \(\Delta\) of simple roots of \(\mathbf{G_0}\) with respect to \(\mathbf{T}\), and let \(\operatorname{Sym} \Delta\) be the subgroup of the permutation group of \(\Delta\) given by Dynkin diagram symmetries.  Then we have a split short exact sequence
    \begin{equation} \label{eq:aut-sym} 1 \longrightarrow \operatorname{Ad} \mathbf{G_0} \xrightarrow{\ \iota \ } \operatorname{Aut} \mathbf{G_0} \xrightarrow{\ \pi \ } \operatorname{Sym} \Delta \longrightarrow 1 \end{equation}
    of \(\operatorname{Gal}(\Q)\)-groups where \(\operatorname{Gal}(\Q)\) acts trivially on \(\operatorname{Sym} \Delta\).  The group \(\mathbf{G}\) corresponds to a unique class \(\alpha \in H^1(k, \operatorname{Aut} \mathbf{G_0})\) and similarly \(\mathbf{H}\) corresponds to a unique class \(\beta \in H^1(l, \operatorname{Aut} \mathbf{G_0})\).  Since we have isomorphisms \(\mathbf{G}(k_u) \cong \mathbf{H}(l_{u'})\), it follows by functoriality that \(\pi_* \alpha \in H^1(k, \operatorname{Sym} \Delta)\) and \(\pi_* \beta \in H^1(l, \operatorname{Sym} \Delta)\) map diagonally to corresponding elements under the induced isomorphism
\[ \prod_{u \nmid \infty} H^1(k_u, \operatorname{Sym} \Delta) \cong \prod_{u' \nmid \infty} H^1(l_{u'}, \operatorname{Sym} \Delta). \]
Let us now first suppose that \(\mathbf{G}\) and hence \(\mathbf{G_0}\) does not have type \(D_4\).  We may then assume that \(\operatorname{Sym} \Delta \cong \Z / 2\Z\) because the proposition is trivial if \(\operatorname{Sym} \Delta\) is.  Note that the first Galois cohomology with coefficients in \(\mathbb{Z}/2\mathbb{Z}\) classifies quadratic field extensions.  So we conclude that \(\pi_* \alpha\) and \(\pi_* \beta\) correspond to quadratic extensions \(K/k\) and \(L/l\), respectively, such that \(\mathbb{A}^f_K \cong_{\mathbb{A}^f_\Q} \mathbb{A}^f_L\) (using \cite{Klingen:arithmetical-similarities}*{Theorem~2.3, p.\,237}).  This shows in particular that \(K\) and \(L\) have the same number of real embeddings~\cite{Klingen:arithmetical-similarities}*{Theorem~1.4\,(h), p.\,79}.  It follows that the number of real places in \(k\) extending to complex places in \(K\) equals the number of real places in \(l\) extending to complex places in \(L\).  Translating back from field extensions to Galois cohomology classes, this shows that \(\alpha\) and \(\beta\) localize to outer forms at the same number of infinite places.  Since \(\alpha\) and \(\beta\) localize to the compact real form (which may be inner or outer depending on the Cartan Killing type) at all other infinite places, \(\alpha_v\) and \(\beta_w\) must be either both outer or both inner forms.  In any case, they are inner twists of each other.

Now if \(\mathbf{G}\) and hence \(\mathbf{G_0}\) does have type \(D_4\), then \(\operatorname{Sym} \Delta \cong S_3 \cong \Z / 3 \rtimes \Z / 2 \).  Note that subgroups of the same order are conjugate in this group.  Therefore, the first Galois cohomology with coefficients in the trivial Galois module \(S_3\) classifies Galois extensions with Galois groups either \(\Z / 2\), or \(\Z / 3\), or \(S_3\) (or trivial).  So \(\pi_* \alpha\) and \(\pi_* \beta\) correspond to Galois extensions \(K/k\) and \(L/l\) of one and the same of these types, again such that \(\mathbb{A}^f_K \cong_{\mathbb{A}^f_\Q} \mathbb{A}^f_L\).  So once more, \(K\) and \(L\) have the same number of real embeddings.  As \(K\) and \(L\) are Galois over \(k\) and \(l\), respectively, real places extend either only to real places or only to complex places in these extensions.  Correspondingly, \(\alpha\) and \(\beta\) localize to outer forms again at the same number of infinite places of \(k\) and \(l\).  As all nontrivial homomorphisms \(\operatorname{Gal} (\R) \rightarrow \operatorname{Sym} \Delta\) are conjugate, any two real outer forms are inner twists of each other even in type \(D_4\).  It follows again that \(\alpha_v\) and \(\beta_w\) must be inner twists of one another.
\end{proof}

  \medskip
  Now let \(G\) be one of the groups listed at the beginning of this section and let \(\Gamma \le G\) be a lattice.  Let \(\Lambda \le H\) be a lattice in another higher rank Lie group and assume that \(\widehat{\Gamma} \cong \widehat{\Lambda}\).  We have to show that \(H\) also defines a hermitian symmetric space.  By Margulis arithmeticity~\cite{Margulis:discrete-subgroups}*{Chapter~IX, Theorem~1.11, p.\,298, (\(\ast \ast\)) and Remark~1.6.(i), pp.\,293-294}, there exists a \(k\)-group \(\mathbf{G}\) and an \(l\)-group \(\mathbf{H}\) with \(k\), \(l\), \(\mathbf{G}\), and \(\mathbf{H}\) as above such that \(\mathbf{G}_v\) is isogenous to \(G\), such that \(\mathbf{H}_w\) is isogenous to \(H\), and such that \(\mathbf{G}(\mathcal{O}_k)\) is commensurable with \(\Gamma\) while \(\mathbf{H}(\mathcal{O}_l)\) is commensurable with \(\Lambda\).  Since \(\Gamma\) and \(\Lambda\) are profinitely isomorphic, so are suitable finite index subgroups of \(\mathbf{G}(\mathcal{O}_k)\) and \(\mathbf{H}(\mathcal{O}_l)\).  Therefore, we have the conclusion from above that there exists an isomorphism \(j \colon \mathbb{A}^f_l \rightarrow \mathbb{A}^f_k\) and an isomorphism \(\eta \colon \mathbf{G} \times_k \mathbb{A}^f_k \longrightarrow \mathbf{H} \times_l \mathbb{A}^f_l\) over \(j\) and Proposition~\ref{prop:inner-twists} holds true for \(\mathbf{G}\) and~\(\mathbf{H}\).

\begin{proposition}
  The groups \(G \approx \operatorname{SU}(n,m)\) with \(n, m \ge 2\) do not exhibit non-profinite second bounded cohomology.
\end{proposition}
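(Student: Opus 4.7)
The plan is to combine Proposition~\ref{prop:inner-twists} with the classification of real forms of type \(A\).  By the setup preceding the proposition, there exist totally real number fields \(k, l\) and simply-connected absolutely almost simple groups \(\mathbf{G}\) over \(k\) and \(\mathbf{H}\) over \(l\) satisfying the hypotheses of Proposition~\ref{prop:inner-twists}, with \(\mathbf{G}_v\) isogenous to \(G \approx \operatorname{SU}(n,m)\) and \(\mathbf{H}_w\) isogenous to the higher rank Lie group \(H\) containing \(\Lambda\).  Since the two groups share a Cartan Killing type, it must be \(A_{n+m-1}\); note that \(n+m-1 \ge 3\), so the Dynkin diagram has a non-trivial symmetry.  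Proposition~\ref{prop:inner-twists} then yields that \(\mathbf{G}_v\) and \(\mathbf{H}_w\) are inner twists of one another as \(\R\)-groups.

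Next, I would invoke the standard classification of absolutely almost simple real Lie groups of type \(A_{n+m-1}\) (see e.g.\ \cite{Onishchik-Vinberg:lie-groups}*{Table~10, p.\,320}): the inner real forms of \(\mathbf{SL}_{n+m}\) are \(\operatorname{SL}_{n+m}(\R)\) and, when \(n+m\) is even, the quaternionic form \(\operatorname{SL}_{(n+m)/2}(\mathbb{H})\), whereas the outer real forms are exactly the unitary groups \(\operatorname{SU}(p,q)\) with \(p+q = n+m\).  Since \(\operatorname{SU}(n,m)\) is of outer type \({}^2 A_{n+m-1}\), its inner twist \(\mathbf{H}_w\) must likewise be an outer form, and hence \(\mathbf{H}_w\) is isogenous to \(\operatorname{SU}(p,q)\) for some pair \((p,q)\) with \(p+q = n+m\).

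To finish, I would exploit the higher rank assumption on \(H\): it gives \(\min(p,q) = \operatorname{rank}_\R \operatorname{SU}(p,q) \ge 2\), so in particular \(p, q \ge 1\), which is precisely the condition for \(\operatorname{SU}(p,q)\) to define a hermitian symmetric space.  The Burger--Monod--Shalom theorem recalled in the introduction then gives \(H^2_b(\Lambda; \R) \cong \R \neq 0\).  Since \(\Gamma\) and \(\Lambda\) were arbitrary profinitely isomorphic lattices in higher rank Lie groups with \(\Gamma \le G \approx \operatorname{SU}(n,m)\), this shows that \(G\) does not exhibit non-profinite second bounded cohomology.  I do not expect any substantial obstacle in this proof: once one recognises that \(\operatorname{SU}(n,m)\) is an outer real form of type \(A\), everything follows from the classification fact that all other outer real forms of the same type are again hermitian unitary groups.
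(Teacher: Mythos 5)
Your proposal is correct and follows essentially the same route as the paper: invoke Proposition~\ref{prop:inner-twists} and then use that the groups \(\operatorname{SU}(p,q)\) are the only outer real forms of type \(A_{n+m-1}\), so \(\mathbf{H}_w\) is again a hermitian unitary group and Monod--Shalom gives \(H^2_b(\Lambda;\R)\neq 0\). Your extra details (listing the inner forms and using the higher rank hypothesis to get \(p,q\ge 2\)) are just elaborations of the paper's one-line classification argument.
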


\begin{proof}
  By Proposition~\ref{prop:inner-twists}, the group \(\mathbf{H}_w\) is an inner twist of \(\mathbf{G}_v\), hence \(H\) is isogenous to \(\operatorname{SU}(n',m')\) for some \(n',m' \ge 2\) because the generalized special unitary groups are the only outer real forms of type \(A_n\) in the classification of real simple Lie groups.  So \(H\) defines a hermitian symmetric space, too.
\end{proof}

\begin{proposition}
  The groups \(G \approx \operatorname{Sp}(n, \R)\) with \(n \ge 2\) do not exhibit non-profinite second bounded cohomology.
\end{proposition}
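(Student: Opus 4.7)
The plan is to invoke Proposition~\ref{prop:inner-twists} to conclude that \(\mathbf{H}_w\) is an inner twist of \(\mathbf{G}_v \approx \operatorname{Sp}(n,\R)\), hence of type \(C_n\). Since the Dynkin diagram of type \(C_n\) admits no non-trivial symmetries, every real form of type \(C_n\) is inner, and the non-compact ones of real rank at least two are the split form \(\operatorname{Sp}(n,\R)\) (hermitian) and the quaternionic forms \(\operatorname{Sp}(p,q)\) with \(p+q=n\) and \(\min(p,q)\ge 2\) (non-hermitian). The task thus reduces to ruling out \(\mathbf{H}_w \approx \operatorname{Sp}(p,q)\).

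The invariant I would exploit is the quaternion algebra attached to an inner form of \(\operatorname{Sp}_{2n}\). From the central extension \(1 \to \mu_2 \to \operatorname{Sp}_{2n} \to \operatorname{PSp}_{2n} \to 1\), the connecting map of pointed sets \(H^1(k, \operatorname{PSp}_{2n}) \to H^2(k, \mu_2) = \operatorname{Br}_2(k)\) sends the class of \(\mathbf{G}\) to the Brauer class of a quaternion algebra \(D_\mathbf{G}\) over \(k\); in fact \(\mathbf{G}\) is the special unitary group of a hermitian form over \(D_\mathbf{G}\). At any place \(u\), the algebra \(D_\mathbf{G} \otimes_k k_u\) is split precisely when \(\mathbf{G}_u\) is the split symplectic group; at an archimedean place where \(\mathbf{G}\) is anisotropic, \(\mathbf{G}(k_u) \cong \operatorname{Sp}(n)\) arises from \(D \otimes k_u \cong \mathbb{H}\), so \(D_\mathbf{G}\) ramifies there. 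Consequently \(D_\mathbf{G}\) splits at \(v\) and ramifies at every other archimedean place of \(k\). The same analysis yields a quaternion algebra \(D_\mathbf{H}\) over \(l\), and if \(\mathbf{H}_w \approx \operatorname{Sp}(p,q)\) then \(D_\mathbf{H}\) ramifies at every archimedean place of \(l\).

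The core step is a parity count. Because the group scheme isomorphism \(\eta\) restricts to \(k_u = l_{u'}\)-isomorphisms \(\mathbf{G}_u \cong \mathbf{H}_{u'}\) at every finite place, and the \(k_u\)-isomorphism class of an inner form of \(\operatorname{Sp}_{2n}\) functorially determines the Brauer class of its attached quaternion algebra, the algebras \(D_\mathbf{G}\) and \(D_\mathbf{H}\) must ramify at the same number of finite places. The Albert--Brauer--Hasse--Noether theorem (compare~\eqref{eq:brauer}) then forces the total number of ramified places of each quaternion algebra to be even. Combined with Klingen's theorem~\cite{Klingen:arithmetical-similarities}*{Theorem~1.4\,(h)}, which yields \(|S_\infty(k)| = [k:\Q] = [l:\Q] = |S_\infty(l)|\) since \(k\) and \(l\) are totally real, the parity of finite ramifications is \(|S_\infty(k)| - 1\) for \(D_\mathbf{G}\) and would be \(|S_\infty(l)|\) for \(D_\mathbf{H}\); equating them produces the absurdity \(1 \equiv 0 \pmod 2\). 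Hence \(\mathbf{H}_w \approx \operatorname{Sp}(n,\R)\), and \(H\) defines a hermitian symmetric space.

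The main subtlety I foresee is making precise the assertion that the local Brauer class is recovered from the abstract \(k_u\)-group scheme \(\mathbf{G}_u\). This rests on \(C_n\) having trivial outer automorphism group---so that \(H^1(k_u, \operatorname{PSp}_{2n})\) parametrizes \(k_u\)-isomorphism classes of inner forms of \(\operatorname{Sp}_{2n}\)---together with naturality of the connecting map in the base field. Beyond this, the bookkeeping using Klingen's theorem and the adelic setup established before Proposition~\ref{prop:inner-twists} should proceed without incident.
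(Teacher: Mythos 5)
Your proposal is correct and is essentially the paper's own argument: both attach to \(\mathbf{G}\) and \(\mathbf{H}\) the \(2\)-torsion Brauer class coming from the connecting map of \(1 \to \mu_2 \to \mathbf{Sp_n} \to \operatorname{Ad}\mathbf{Sp_n} \to 1\), match these classes at all places except \(v,w\) via \(\eta\) and the compactness \(\operatorname{Sp}(n)\) at the remaining archimedean places, and then invoke the Albert--Brauer--Hasse--Noether reciprocity to transfer triviality at \(v\) to triviality at \(w\). Your parity count of ramified places of the quaternion algebras (with the degree equality \([k:\Q]=[l:\Q]\)) is just a repackaging of the paper's computation of local invariants of \(\delta_k(\alpha)\) and \(\delta_l(\beta)\), so the two proofs coincide in substance.
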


\begin{proof}
  Let \(\mathbf{G_0} = \mathbf{Sp_n}\) be the unique \(\Q\)-split simply-connected \(\Q\)-group of type \(C_n\).  Then \(\operatorname{Sym} \Delta = 1\), so \(\operatorname{Aut} \mathbf{G_0} = \operatorname{Ad} \mathbf{G_0}\) and we have \(Z(\mathbf{G_0}) = \mu_2\).  From the short exact sequence of \(\operatorname{Gal}(\Q)\)-groups
  \[ 1 \longrightarrow Z(\mathbf{G_0}) \longrightarrow \mathbf{G_0} \longrightarrow \operatorname{Ad} \mathbf{G_0} \longrightarrow 1 \]
  we obtain a boundary map \(\delta_K \colon H^1(K, \operatorname{Ad} \mathbf{G_0}) \rightarrow H^2(K, \mu_2)\) for any field extension \(K/\Q\).  As we saw in~\cite{Kammeyer-Spitler:chevalley}*{Section~5}, the kernel of \(\delta_\R\) consists of the class corresponding to the split form \(\operatorname{Sp}(n, \R)\) only, while the other real forms of type \(C_n\) are the groups \(\operatorname{Sp}(p,q)\) with \(p + q = n\) and they form precisely the fiber under \(\delta_\R\) of the nontrivial element in \(H^2(\R, \mu_2) \cong \mathbb{Z} / 2 \mathbb{Z}\).  Let \(\alpha \in H^1(k, \operatorname{Ad} \mathbf{G_0})\) be the cohomology class corresponding to \(\mathbf{G}\) and let \(\beta \in H^1(l, \operatorname{Ad} \mathbf{G_0})\) be the cohomology class corresponding to \(\mathbf{H}\).  Then \(\delta_k(\alpha)\) and \(\delta_l(\beta)\) map diagonally to corresponding elements under the isomorphism
  \begin{equation} \label{eq:central} \prod_{u \neq v} H^2(k_u, \mu_2) \cong \prod_{u' \neq w} H^2(l_{u'}, \mu_2). \end{equation}
  Indeed, this follows from the isomorphism \(\eta\) for \(u \nmid \infty\).  Additionally, we know that at infinite places \(u \neq v\) and \(u' \neq w\), the groups \(\mathbf{G}_{k_u}\) and \(\mathbf{H}_{l_{u'}}\) are anisotropic hence isomorphic to the unique anisotropic real form \(\operatorname{Sp}(n)\) of type \(C_n\).  The number field version of the Albert--Brauer--Hasse--Noether theorem~\eqref{eq:brauer} says that there is a short exact sequence
  \begin{equation} \label{eq:brauer-general} 1 \longrightarrow \operatorname{Br}(k) \longrightarrow \bigoplus_u \operatorname{Br}(k_u) \longrightarrow \Q/\Z \longrightarrow 1. \end{equation}
  Since \(\mathbf{G}_v \approx \operatorname{Sp}(n,\R)\) is the real split form, \(\alpha_v\) is trivial.  Hence so is \(\delta_\R(\alpha_v)\) and therefore also \(\delta_\R(\beta_w)\) by~\eqref{eq:central} and~\eqref{eq:brauer-general}.  So \(\beta_w\) is trivial, too.  This shows that \(H \approx \operatorname{Sp}(n, \R)\) is hermitian.
\end{proof}

\begin{proposition}
  The groups \(G \approx \operatorname{SO}^*(2n)\) with \(n \ge 5\) do not exhibit non-profinite second bounded cohomology.
\end{proposition}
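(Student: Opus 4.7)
The proof follows the strategy of the preceding $\operatorname{Sp}(n, \R)$ case: we introduce a boundary map from $H^1$ into $H^2$ of the center of a suitable simply-connected reference $\Q$-form and transfer the local invariant from $v$ to $w$ via the global Albert--Brauer--Hasse--Noether theorem. Fix a convenient reference $\Q$-form $\mathbf{G_0}$ of type $D_n$. For $n$ even one may take $\mathbf{G_0}$ to be the $\Q$-split simply-connected form, whose center $Z := Z(\mathbf{G_0})$ is isomorphic to $\mu_2 \times \mu_2$; for $n$ odd we have $Z \cong \mu_4$, and since $\operatorname{SO}^*(2n)$ is an outer real form in that case, a preliminary twist of the split form may be required so that both $\mathbf{G}$ and $\mathbf{H}$ appear as inner forms of $\mathbf{G_0}$. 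The short exact sequence of $\operatorname{Gal}(\Q)$-groups
\[ 1 \longrightarrow Z \longrightarrow \mathbf{G_0} \longrightarrow \operatorname{Ad} \mathbf{G_0} \longrightarrow 1 \]
yields boundary maps $\delta_K \colon H^1(K, \operatorname{Ad} \mathbf{G_0}) \rightarrow H^2(K, Z)$ for every field extension $K/\Q$.

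The key step is to identify the fiber of $\delta_\R$ containing the class of $\operatorname{SO}^*(2n)$ and to verify that, among the higher rank real forms of $D_n$, this fiber consists only of the two hermitian forms $\operatorname{SO}^*(2n)$ and $\operatorname{SO}^0(2n-2, 2)$ (together with the compact form $\operatorname{Spin}(2n)$, which is irrelevant since it has no noncompact lattices). In particular, the non-hermitian forms $\operatorname{Spin}(p, q)$ with $p, q \ge 3$ and $p + q = 2n$ must fall outside this fiber. Once this is secured, the global argument is routine: let $\alpha \in H^1(k, \operatorname{Ad} \mathbf{G_0})$ and $\beta \in H^1(l, \operatorname{Ad} \mathbf{G_0})$ be the classes of $\mathbf{G}$ and $\mathbf{H}$; the adelic isomorphism $\eta$ matches $\delta(\alpha)$ and $\delta(\beta)$ coordinate-wise at all finite places, and at the remaining infinite places both groups are anisotropic, hence isomorphic to the unique compact real form of $D_n$ and therefore have matching $\delta_\R$-values. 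Applying the extended Albert--Brauer--Hasse--Noether theorem \eqref{eq:brauer-general} to each $\mu_2$-component of $Z$ (and to $\mu_4$ via its $\mu_2$-quotient when $n$ is odd) then forces $\delta_\R(\alpha_v) = \delta_\R(\beta_w)$, so that $\mathbf{H}_w$ lies in the hermitian fiber and $H$ defines a hermitian symmetric space.

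The principal obstacle is the fiber analysis at the real place: in contrast to the $\operatorname{Sp}(n, \R)$ case, where the kernel of $\delta_\R$ comprises only the split form and all other real forms lie in a single nontrivial fiber, the richer center of type $D_n$ produces a finer partition that must be pinned down using the Cartan classification of real forms and the tables of~\cite{Adams-Taibi:galois-cohomology}, with a case distinction on the parity of $n$ driven by the shifting inner/outer status of $\operatorname{SO}^*(2n)$.
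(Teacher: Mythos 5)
Your overall strategy -- extract a Brauer-type invariant at the real place via a boundary map into \(H^2\) of the center and transport it from \(v\) to \(w\) with the Albert--Brauer--Hasse--Noether sequence \eqref{eq:brauer-general} -- is viable and is in essence the same invariant the paper uses. But as written the proposal has a genuine gap exactly at the point you yourself flag as the crux: the real fiber analysis is never carried out, and the fiber structure you assert is in fact false. The component of \(\delta_\R\) corresponding to the vector representation (the character of \(Z\) with kernel \(\ker(\mathbf{Spin}_{2n} \to \mathbf{SO}_{2n})\)) computes the Brauer class of the underlying Tits algebra of the form. This class is trivial for \emph{every} \(\operatorname{SO}(p,q)\), \(p+q=2n\) -- including \(\operatorname{SO}^0(2n-2,2)\) and the compact form -- and nontrivial precisely for \(\operatorname{SO}^*(2n)\), whose algebra is \(M_n(\mathbb{H})\). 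So \(\operatorname{SO}^0(2n-2,2)\) and \(\operatorname{Spin}(2n)\) do \emph{not} lie in the fiber of \(\operatorname{SO}^*(2n)\); rather, that fiber isolates \(\operatorname{SO}^*(2n)\). This happens to make the conclusion even easier, but you neither prove the needed statement nor state it correctly. A second inaccuracy is the inner/outer bookkeeping: the parity of \(n\) governs whether the \emph{real} form \(\operatorname{SO}^*(2n)\) is inner, not whether the \(k\)-group \(\mathbf{G}\) is; an outer \(k\)-form of type \({}^2D_n\) can perfectly well localize to an inner real form, so the twisted Galois action on the center must be confronted for all \(n\), and your ``preliminary twist may be required'' plus ``apply ABHN to each \(\mu_2\)-component'' does not address it. The repair is to push \(\delta\) forward along the vector-representation character, which is fixed by the outer automorphism of \(D_n\) and hence Galois-equivariant for any twist; the image then lives in \(\operatorname{Br}_2(k)\), where \eqref{eq:brauer-general} applies.

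For comparison, the paper sidesteps all of this cohomological bookkeeping by working directly with central simple algebras with involution: by Weil--Jacobson, \(\mathfrak{g}\) and \(\mathfrak{h}\) are the skew-symmetric elements of central simple algebras \(A/k\) and \(B/l\) with orthogonal involutions; Jacobson's theorem extends the local Lie algebra isomorphisms \(\mathfrak{g}_u \cong \mathfrak{h}_{u'}\) to isomorphisms \(A_u \cong B_{u'}\), so the Brauer classes agree at all places except possibly \(v,w\); since \(\mathfrak{so}^*(2n)\) is the unique real form of \(D_n\) whose algebra is non-split, \eqref{eq:brauer-general} forces \(B_w\) to be non-split and hence \(\mathfrak{h}_w \cong \mathfrak{so}^*(2n)\). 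If you want to keep your Galois-cohomological formulation, you must supply the Tits-algebra computation of the \(\omega_1\)-component of \(\delta_\R\) for all real forms of \(D_n\) (and the equivariance of that character under outer twists); with that done, your global argument does go through.
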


\begin{proof}
  In this case \(\mathbf{G}\) and \(\mathbf{H}\) have type \(D_n\) and it is more convenient to argue in terms of central simple algebras instead of Galois cohomology.  Indeed, it is then well-known by the work of Weil and Jacobson (see for instance \cite{Schoeneberg:semisimple}*{Theorem~4.5.10}) that there exist central simple algebras \(A\) and \(B\) over \(k\) and \(l\), respectively, each endowed with an involution \(\sigma\) of the first kind and of orthogonal type such that the skew symmetric Lie subalgebras of \(A\) and \(B\) consisting of the elements \(x\) satisfying \(\sigma(x) = -x\) with commutator Lie bracket \([x,y] = xy - yx\) are \(k\)- and \(l\)-isomorphic to the Lie algebras \(\mathfrak{g}\) of \(\mathbf{G}\) and \(\mathfrak{h}\) of \(\mathbf{H}\), respectively.  We have isomorphisms \(\mathfrak{g}_u \cong \mathfrak{h}_{u'}\) for all finite and infinite places \(u\) except possibly \(v\). From~\cite{Jacobson:lie-algebras}*{Chapter~X, Theorem~12} we conclude that these extend uniquely to isomorphisms \(A_u \cong B_{u'}\) which identify the involutions with one another.  In particular, \(A_u\) and \(B_{u'}\) have equal Brauer classes.  But since \(\mathfrak{g}_v \cong \mathfrak{so}^*(2n) = \mathfrak{so}(n, \mathbb{H})\), it follows that \(A_v\) has a non-trivial Brauer class, so by the Albert--Brauer--Hasse--Noether theorem~\eqref{eq:brauer-general}, the same is true for \(B_w\).  But all real forms of type \(D_n\) except \(\mathfrak{so}^*(2n)\) are special orthogonal lie algebras of quadratic forms over \(\mathbb{R}\), so the corresponding involutorial central simple algebra have trivial Brauer classes.  Hence of necessity \(\mathfrak{h}_w \cong \mathfrak{so}^*(2n)\) as well.  So \(H\) is isogenous to \(\operatorname{SO}^*(2n)\) and hence hermitian.
\end{proof}

This leaves the group \(G \approx \operatorname{SO}^0(5,2)\) as the only remaining case.  The only higher rank real twist of \(G\) up to isogeny is the group \(H \approx \operatorname{SO}^0(4,3)\).  However, this possibility can be excluded because the symmetric space defined by \(G\) is \(10\)-dimensional whereas the symmetric space defined by \(H\) is \(12\)-dimensional and the dimension mod four is a profinite invariant by~\cite{Kammeyer-et-al:profinite-invariants}*{Theorem~2.1}.

\begin{bibdiv}[References]
  \begin{biblist}

    \bib{Adams-Taibi:galois-cohomology}{article}{
   author={Adams, Jeffrey},
   author={Ta\"{\i}bi, Olivier},
   title={Galois and Cartan cohomology of real groups},
   journal={Duke Math. J.},
   volume={167},
   date={2018},
   number={6},
   pages={1057--1097},
   issn={0012-7094},
   review={\MR{3786301}},
 }

 \bib{Aka:kazhdan}{article}{
   author={Aka, Menny},
   title={Profinite completions and Kazhdan's property (T)},
   journal={Groups Geom. Dyn.},
   volume={6},
   date={2012},
   number={2},
   pages={221--229},
   issn={1661-7207},
   review={\MR{2914858}},
}
 
 \bib{Borel-Harder:existence}{article}{
   author={Borel, A.},
   author={Harder, G.},
   title={Existence of discrete cocompact subgroups of reductive groups over
   local fields},
   journal={J. Reine Angew. Math.},
   volume={298},
   date={1978},
   pages={53--64},
   issn={0075-4102},
   review={\MR{483367}},
}

 \bib{Burger-Monod:bounded-cohomology}{article}{
   author={Burger, M.},
   author={Monod, N.},
   title={Bounded cohomology of lattices in higher rank Lie groups},
   journal={J. Eur. Math. Soc. (JEMS)},
   volume={1},
   date={1999},
   number={2},
   pages={199--235},
   issn={1435-9855},
   review={\MR{1694584}},
 }

 \bib{Burger-Monod:continuous}{article}{
   author={Burger, M.},
   author={Monod, N.},
   title={Continuous bounded cohomology and applications to rigidity theory},
   journal={Geom. Funct. Anal.},
   volume={12},
   date={2002},
   number={2},
   pages={219--280},
   issn={1016-443X},
   review={\MR{1911660}},
 }
 
\bib{Cheetham-West-et-al:property-fa}{article}{
    author={Cheetham-West, T.},
    author={Lubotzky, A.},
    author={Reid, A.},
    author={Spitler, R.},
    title={Property FA is not a profinite property},
    journal={e-print},
    date={2022},
    review={\arXiv{2212.08207}},
 }

\bib{Frigerio:bounded-cohomology}{book}{
   author={Frigerio, Roberto},
   title={Bounded cohomology of discrete groups},
   series={Mathematical Surveys and Monographs},
   volume={227},
   publisher={American Mathematical Society, Providence, RI},
   date={2017},
   pages={xvi+193},
   isbn={978-1-4704-4146-3},
   review={\MR{3726870}},
 }

\bib{Fujiwara:hyperbolic}{article}{
    author  = {Fujiwara, Koji}
  , title   = {The Second Bounded Cohomology of a Group Acting on a Gromov-Hyperbolic Space}
  , journal = {Proceedings of the London Mathematical Society}
  , year    = {1998}
  , volume  = {76}
  , number  = {1}
  , pages   = {70--94}
  , review  = {\MR{1476898}}
}
 
\bib{Gille:kneser-tits}{article}{
   author={Gille, Philippe},
   title={Le probl\`eme de Kneser-Tits},
   language={French, with French summary},
   note={S\'{e}minaire Bourbaki. Vol. 2007/2008},
   journal={Ast\'{e}risque},
   number={326},
   date={2009},
   pages={Exp. No. 983, vii, 39--81 (2010)},
   issn={0303-1179},
   isbn={978-285629-269-3},
   review={\MR{2605318}},
 }
 
\bib{Glebsky-et-al:asymptotic-cohomology}{article}{
   author={Glebsky, L.},
   author={Lubotzky, A.},
   author={Monod, N.},
   author={Rangarajan, B.},
   title={Asymptotic Cohomology and Uniform Stability for Lattices in Semisimple Groups},
   journal={e-print},
   date={2023},
   review={\arXiv{2301.00476}},
 }

\bib{Helgason:differential-geometry}{book}{
   author={Helgason, Sigurdur},
   title={Differential geometry, Lie groups, and symmetric spaces},
   series={Graduate Studies in Mathematics},
   volume={34},
   note={Corrected reprint of the 1978 original},
   publisher={American Mathematical Society, Providence, RI},
   date={2001},
   pages={xxvi+641},
   isbn={0-8218-2848-7},
   review={\MR{1834454}},
 }

 \bib{Jacobson:lie-algebras}{book}{
   author={Jacobson, Nathan},
   title={Lie algebras},
   note={Republication of the 1962 original},
   publisher={Dover Publications, Inc., New York},
   date={1979},
   pages={ix+331},
   isbn={0-486-63832-4},
   review={\MR{559927}},
 }
 
 \bib{Kammeyer-Kionke:adelic}{article}{
   author={Kammeyer, Holger},
   author={Kionke, Steffen},
   title={Adelic superrigidity and profinitely solitary lattices},
   journal={Pacific J. Math.},
   volume={313},
   date={2021},
   number={1},
   pages={137--158},
   issn={0030-8730},
   review={\MR{4313430}},
 }

 \bib{Kammeyer-et-al:profinite-invariants}{article}{
   author={Kammeyer, Holger},
   author={Kionke, Steffen},
   author={Raimbault, Jean},
   author={Sauer, Roman},
   title={Profinite invariants of arithmetic groups},
   journal={Forum Math. Sigma},
   volume={8},
   date={2020},
   pages={Paper No. e54, 22},
   review={\MR{4176758}},
 }
 
\bib{Kammeyer-Sauer:spinor-groups}{article}{
   author={Kammeyer, Holger},
   author={Sauer, Roman},
   title={$S$-arithmetic spinor groups with the same finite quotients and
   distinct $\ell^2$-cohomology},
   journal={Groups Geom. Dyn.},
   volume={14},
   date={2020},
   number={3},
   pages={857--869},
   issn={1661-7207},
   review={\MR{4167024}},
 }
 
  \bib{Kammeyer-Spitler:chevalley}{article}{
    author={Kammeyer, H.},
    author={Spitler, R.},
    title={Galois cohomology and profinitely solitary Chevalley groups},
    journal={e-print},
   date={2023},
   review={\arXiv{2301.11103}},
 }

 \bib{Kionke-Schesler:amenability}{article}{
   author={Kionke, S.},
   author={Schesler, E.},
   title={Amenability and profinite completions of finitely generated groups},
   journal={e-print},
   date={2021},
   review={\arXiv{2106.08742}},
 }

 \bib{Klingen:arithmetical-similarities}{book}{
   author={Klingen, Norbert},
   title={Arithmetical similarities},
   series={Oxford Mathematical Monographs},
   note={Prime decomposition and finite group theory;
   Oxford Science Publications},
   publisher={The Clarendon Press, Oxford University Press, New York},
   date={1998},
   pages={x+275},
   isbn={0-19-853598-8},
   review={\MR{1638821}},
 }
 
\bib{Kneser:galois}{article}{
   author={Kneser, Martin},
   title={Galois-Kohomologie halbeinfacher algebraischer Gruppen \"{u}ber
   ${\germ p}$-adischen K\"{o}rpern. II},
   language={German},
   journal={Math. Z.},
   volume={89},
   date={1965},
   pages={250--272},
   issn={0025-5874},
   review={\MR{188219}},
 }

 \bib{Lackenby:large}{article}{
   author={Lackenby, Marc},
   title={Detecting large groups},
   journal={J. Algebra},
   volume={324},
   date={2010},
   number={10},
   pages={2636--2657},
   issn={0021-8693},
   review={\MR{2725193}},
 }

\bib{Lubotzky:finiteness-properties}{article}{
   author={Lubotzky, A.},
   title={Finiteness properties and profinite completions},
   journal={Bull. Lond. Math. Soc.},
   volume={46},
   date={2014},
   number={1},
   pages={103--110},
   issn={0024-6093},
   review={\MR{3161766}},
 }

\bib{Lubotzky:more-rigid-groups}{inproceedings}{
    title       = {Some more non-arithmetic rigid groups}
  , author      = {Lubotzky, Alexander}
  , booktitle   = {Geometry, spectral theory, groups, and dynamics}
  , series      = {Contemporary Mathematics}
  , volume      = {387}
  , editor      = {Entov, Michael and Pinchover, Yehuda and Sageev, Michah}
  , institution = {Technion-Israel Institute of Technology}
  , venue       = {Haifa, Israel}
  , eventdate   = {2003-12-29/2004-01-09}
  , year        = {2005}
  , publisher   = {American Mathematical Society}
  , pages       = {237--244}
  , review      = {\MR{2180210}}
}

 \bib{Margulis:discrete-subgroups}{book}{
   author={Margulis, G. A.},
   title={Discrete subgroups of semisimple Lie groups},
   series={Ergebnisse der Mathematik und ihrer Grenzgebiete (3)},
   volume={17},
   publisher={Springer-Verlag, Berlin},
   date={1991},
   pages={x+388},
   isbn={3-540-12179-X},
   review={\MR{1090825}},
}

\bib{Monod-Shalom:cocycle-superrigidity}{article}{
   author={Monod, Nicolas},
   author={Shalom, Yehuda},
   title={Cocycle superrigidity and bounded cohomology for negatively curved
   spaces},
   journal={J. Differential Geom.},
   volume={67},
   date={2004},
   number={3},
   pages={395--455},
   issn={0022-040X},
   review={\MR{2153026}},
 }

 \bib{Witte-Morris:arithmetic}{book}{
   author={Morris, Dave W.},
   title={Introduction to arithmetic groups},
   publisher={Deductive Press, [place of publication not identified]},
   date={2015},
   pages={xii+475},
   isbn={978-0-9865716-0-2},
   isbn={978-0-9865716-1-9},
   review={\MR{3307755}},
 }
 
\bib{Onishchik-Vinberg:lie-groups}{book}{
   author={Onishchik, A. L.},
   author={Vinberg, \`E. B.},
   title={Lie groups and algebraic groups},
   series={Springer Series in Soviet Mathematics},
   note={Translated from the Russian and with a preface by D. A. Leites},
   publisher={Springer-Verlag, Berlin},
   date={1990},
   pages={xx+328},
   isbn={3-540-50614-4},
   review={\MR{1064110}},
 }

 \bib{Platonov-Rapinchuk:algebraic-groups}{book}{
   author={Platonov, Vladimir},
   author={Rapinchuk, Andrei},
   title={Algebraic groups and number theory},
   series={Pure and Applied Mathematics},
   volume={139},
   note={Translated from the 1991 Russian original by Rachel Rowen},
   publisher={Academic Press, Inc., Boston, MA},
   date={1994},
   pages={xii+614},
   isbn={0-12-558180-7},
   review={\MR{1278263}},
 }
 
 \bib{Prasad-Rapinchuk:metaplectic-kernel}{article}{
   author={Prasad, Gopal},
   author={Rapinchuk, Andrei S.},
   title={Computation of the metaplectic kernel},
   journal={Inst. Hautes \'{E}tudes Sci. Publ. Math.},
   number={84},
   date={1996},
   pages={91--187 (1997)},
   issn={0073-8301},
   review={\MR{1441007}},
}
 
\bib{Prasad-Rapinchuk:isotropic}{article}{
   author={Prasad, Gopal},
   author={Rapinchuk, Andrei S.},
   title={On the existence of isotropic forms of semi-simple algebraic
   groups over number fields with prescribed local behavior},
   journal={Adv. Math.},
   volume={207},
   date={2006},
   number={2},
   pages={646--660},
   issn={0001-8708},
   review={\MR{2271021}},
 }

 \bib{Raghunathan:csp}{article}{
   author={Raghunathan, M. S.},
   title={On the congruence subgroup problem},
   journal={Inst. Hautes \'{E}tudes Sci. Publ. Math.},
   number={46},
   date={1976},
   pages={107--161},
   issn={0073-8301},
   review={\MR{507030}},
 }

 \bib{Raghunathan:csp2}{article}{
   author={Raghunathan, M. S.},
   title={On the congruence subgroup problem. II},
   journal={Invent. Math.},
   volume={85},
   date={1986},
   number={1},
   pages={73--117},
   issn={0020-9910},
   review={\MR{842049}},
}
 
\bib{Ribes-Zalesskii:profinite-groups}{book}{
   author={Ribes, Luis},
   author={Zalesskii, Pavel},
   title={Profinite groups},
   series={Ergebnisse der Mathematik und ihrer Grenzgebiete. 3. Folge. A
   Series of Modern Surveys in Mathematics},
   volume={40},
   edition={2},
   publisher={Springer-Verlag, Berlin},
   date={2010},
   pages={xvi+464},
   isbn={978-3-642-01641-7},
   review={\MR{2599132}},
 }

 \bib{Schoeneberg:semisimple}{article}{
   author={Schoeneberg, Torsten},
   title={Semisimple Lie algebras and their classification over $\germ
   p$-adic fields},
   language={English, with English and French summaries},
   journal={M\'{e}m. Soc. Math. Fr. (N.S.)},
   number={151},
   date={2017},
   pages={147},
   issn={0249-633X},
   isbn={978-2-85629-859-6},
   review={\MR{3703858}},
 }
 
\bib{Serre:galois-cohomology}{book}{
   author={Serre, Jean-Pierre},
   title={Galois cohomology},
   series={Springer Monographs in Mathematics},
   edition={Corrected reprint of the 1997 English edition},
   note={Translated from the French by Patrick Ion and revised by the
   author},
   publisher={Springer-Verlag, Berlin},
   date={2002},
   pages={x+210},
   isbn={3-540-42192-0},
   review={\MR{1867431}},
 }
 
\end{biblist}
\end{bibdiv}

\end{document}